\theoremstyle{plain}
\newtheorem{theorem}{Theorem}[section]
\newtheorem{lemma}[theorem]{Lemma}
\newtheorem{corollary}[theorem]{Corollary}
\newtheorem{proposition}[theorem]{Proposition}
\newtheorem{remark}[theorem]{Remark}
\theoremstyle{definition}
\newtheorem{definition}[theorem]{Definition}
\newtheorem{example}[theorem]{Example}
\DeclareMathOperator{\Int}{Int}
\DeclareMathOperator{\Sk}{{Sk}}
\DeclareMathOperator{\IntR}{Int{}^\text{R}}
\DeclareMathOperator{\minval}{minval}
\DeclareMathOperator{\loc}{loc}
\DeclareMathOperator{\Spec}{Spec}
\renewcommand{\epsilon}{\varepsilon}
\newcommand{\N}{{\mathbb N}}
\newcommand{\C}{{\mathbb C}}
\newcommand{\Q}{{\mathbb Q}}
\newcommand{\Z}{{\mathbb Z}}
\newcommand{\p}{\mathfrak{p}}
\newcommand{\q}{\mathfrak{q}}
\newcommand{\m}{\mathfrak{m}}
\newcommand{\M}{\mathfrak{M}}
\renewcommand{\P}{\mathfrak{P}}
\DeclareMathOperator{\Max}{Max}
\definecolor{gray}{rgb}{.5,.5,.5}
\definecolor{black}{rgb}{0,0,0}
\definecolor{blue}{rgb}{0,0,1}
\definecolor{red}{rgb}{1,0,0}
\definecolor{green}{rgb}{0,1,0}
\definecolor{gold}{rgb}{.5,.5,.2}
\definecolor{yellow}{rgb}{1,1,.4}
\definecolor{purple}{rgb}{.5,0,.5}
\definecolor{darkgreen}{rgb}{0,.5,0}
\definecolor{orange}{rgb}{1,.55,0}
\definecolor{white}{rgb}{1,1,1}
\let\originalleft\left
\let\originalright\right
\renewcommand{\left}{\mathopen{}\mathclose\bgroup\originalleft}
\renewcommand{\right}{\aftergroup\egroup\originalright}
\let\originaltodo\todo
\renewcommand{\todo}[1]{\originaltodo[inline]{#1}}
\title{The Skolem property in rings of integer-valued rational functions}
\author{Baian Liu}
\begin{document}

\maketitle

\begin{abstract}
	Let $D$ be a domain and let $\Int(D)$ and $\IntR(D)$ be the ring of integer-valued polynomials and the ring of integer-valued rational functions, respectively. Skolem proved that if $I$ is a finitely-generated ideal of $\Int(\Z)$ with all the value ideals of $I$ not being proper, then $I = \Int(\Z)$. This is known as the Skolem property, which does not hold in $\Z[x]$. One obstruction to $\Int(D)$ having the Skolem property is the existence of unit-valued polynomials. This is no longer an obstruction when we consider the Skolem property on $\IntR(D)$. We determine that the Skolem property on $\IntR(D)$ is equivalent to the maximal spectrum being contained in the ultrafilter closure of the set of maximal pointed ideals. We generalize the Skolem property using star operations and determine an analogous equivalence under this generalized notion. 
\end{abstract}

\section{Introduction}

	\indent\indent Given a domain $D$, the ring of integer-valued polynomials over $D$ has been studied extensively. A collection of results on integer-valued polynomials can be found in \cite{Cahen}. However, not much is known about the ring of integer-valued rational functions. Despite having similar definitions, the ring of integer-valued polynomials and the ring of integer-valued rational functions can behave very differently. We start by giving the definitions of these two ring extensions of $D$ with a variation that is slightly more general. 

\begin{definition}
	Let $D$ be a domain with $K$ as the field of fractions. Also let $E \subseteq K$ be a non-empty subset. We denote by
	\[
		\IntR(D) \coloneqq \{ \varphi \in K(x) \mid \varphi(a) \in D,\, \forall a \in D \} \quad \text{and} \quad	\IntR(E,D) \coloneqq \{ \varphi \in K(x) \mid \varphi(a) \in D,\,\forall a \in E \}
	\]
	the \textbf{ring of integer-valued rational functions over $D$} and the \textbf{ring of integer-valued rational functions on $E$ over $D$}, respectively.	Note that $\IntR(D,D) = \IntR(D)$. 
	
	Compare these definitions to 
	\[
		\Int(D) \coloneqq \{ f \in K[x]\mid f(a) \in D,\, \forall a \in D \} \quad \text{and} \quad	\Int(E,D) \coloneqq \{ f \in K[x] \mid f(a) \in D,\,\forall a \in E \}
	\]
	the \textbf{ring of integer-valued polynomials over $D$} and \textbf{ring of integer-valued polynomials on $E$ over $D$}, respectively. 
\end{definition}

One way the behavior of rings of integer-valued rational functions differs from that of integer-valued polynomials is the Skolem property. Having the Skolem property allows us to determine if a finitely-generated ideal of $\IntR(E,D)$ is proper through evaluation.  

\begin{definition}
	Let $D$ be a domain with field of fractions $K$. Take $E \subseteq K$ to be some subset. Let $A$ be a ring such that $D \subseteq A \subseteq \IntR(E,D)$. For $I$ an ideal of $A$ and $a \in E$, we denote by
	\[
	I(a) \coloneqq \{\varphi(a) \mid \varphi \in I \}
	\]
	the \textbf{value ideal of $I$ at $a$}. Note that $I(a)$ is an ideal of $D$. We say that $A$ has the \textbf{Skolem property} if for every finitely-generated ideal $I$ of $\IntR(E,D)$ such that $I(a) = D$ for all $a \in E$, then $I = A$. 
	
\end{definition}

In order for $\Int(D)$ to have the Skolem property, it is necessary that there does not exist a non-constant $f \in D[x]$ such that $f(d) \in D^\times$ for all $d \in D$. Such a polynomial $f$ is called a \textbf{unit-valued polynomial}. This is because all of the value ideals for $f\Int(D)$ equal $D$, but $f$ is not a unit of $\Int(D)$. We know that $f$ is not a unit of $\Int(D)$ because $\frac{1}{f}$ is not a polynomial. However, $\frac{1}{f}$ is a rational function, so $f\IntR(D) = \IntR(D)$. Therefore, the existence of a unit-valued polynomial prevents $\Int(D)$ from having the Skolem property, but it does not necessarily prevent $\IntR(D)$ from having the Skolem property. 

Furthermore, if $D$ is a noetherian domain that is not a field, then $\Int(D)$ having the Skolem property implies that for every maximal ideal $\m$ of $D$, we have that $D/\m$ is algebraically closed, or $\m$ has height one and a finite residue field \cite[2.1 Proposition]{ChabertPolynomesAValeursEntieres}. For $\IntR(D)$, the Skolem property is not as restrictive. For example, if $D$ is any local domain with a finite residue field, then $\IntR(D)$ has the Skolem property \cite[Proposition X.3.7]{Cahen}. Another example that the Skolem property is less restrictive for rings of integer-valued rational functions is that if $D$ is a domain with $E$ a non-empty subset of the field of fractions of $D$ such that $\IntR(E,D)$ is a Prüfer domain, then $\IntR(E,D)$ has the Skolem property \cite[Lemma 4.1]{IntValuedRational}. One condition that makes the ring $\IntR(D)$ Prüfer is if $D$ is a Prüfer domain such that there exists a monic unit-valued polynomial in $D[x]$ \cite[Corollary 3.4]{PruferNonDRings}, which can be achieved with no restrictions on the height of maximal ideals of $D$ using \cite[Corollary 2.6]{PruferNonDRings}. 

The Skolem property is also related to the form of the maximal ideals. Given a noetherian domain $D$ of dimension strictly greater than one, we have that $\Int(D)$ has the Skolem property if and only if all of the maximal ideals of $\Int(D)$ is of the form $\{f(x) \in \Int(D) \mid f(a) \in \m\hat{D}_\m \}$ for some maximal ideal $\m$ of $D$ and $a \in \hat{D}_\m$, where $\hat{D}_\m$ is the $\m$-adic completion of $D$ \cite[4.6 Théorème]{ChabertPolynomesAValeursEntieres}. A result of the same nature concerning rings integer-valued rational functions is that if $V$ is a valuation domain with $E$ a non-empty subset of the field of fractions such that $\IntR(E,V)$ is Prüfer, then all of the maximal ideals of $\IntR(E,V)$ are ultrafilter limits of the set of maximal pointed ideals \cite[Proposition 6.1]{IntValuedRational}. We will define ultrafilter limits and maximal pointed ideals in the next section. 

In this work, we generalize the relationship between the Skolem property and the maximal ideals of rings of integer-valued rational functions. Section \ref{Sect:Background} gives definitions and previous results necessary to state our results. Section \ref{Sect:SkolemSpectrum} connects the Skolem property and the maximal spectrum of a ring of integer-valued rational functions. Section \ref{Sect:StarStar} extends this connection using star operations. Section \ref{Sect:StrongSkolem} provides examples of when the strong Skolem property, a property for distinguishing finitely-generated ideals through evaluation, fails.

\section{Background}\label{Sect:Background}

\indent\indent A notion related to the Skolem property is the strong Skolem property. The strong Skolem property can be thought of as the ability to distinguish finitely-generated ideals of $\IntR(E,D)$ using evaluation. 

\begin{definition}
	Suppose $D$ is a domain with field of fractions $K$ and $E$ is a subset of $K$. Then we say that $\IntR(E,D)$ has the \textbf{strong Skolem property} if whenever $I$ and $J$ are finitely-generated ideals of $\IntR(E,D)$ such that $I(a) = J(a)$ for all $a \in E$, then $I = J$. 
\end{definition}

\begin{remark}
	The strong Skolem property implies the Skolem property by taking $J = \IntR(E,D)$. 
\end{remark}

The strong Skolem property is closely related to the property of being a Prüfer domain. If $\IntR(D)$ is a Prüfer domain, then $\IntR(D)$ has the strong Skolem property \cite[Theorem 4.2]{IntValuedRational}. In fact, if $D$ is assumed to be a Prüfer domain, then $\IntR(D)$ having the strong Skolem property implies that $\IntR(D)$ is a Prüfer domain \cite[Theorem 4.7]{Steward}.

We may reformulate the definitions of the Skolem property and the strong Skolem property using a closure operation. 

\begin{definition}
	Let $D$ be a domain and let $E$ be a subset of the field of fractions of $D$. For an ideal $I$ of $\IntR(E,D)$, the \textbf{Skolem closure of $I$} is 
	\[
		I^{\Sk} \coloneqq \{\varphi \in \IntR(E,D) \mid \varphi(a) \in I(a) \text{ for all $a \in E$} \}. 
	\]
	We say that $I$ is \textbf{Skolem closed} if $I = I^{\Sk}$. 
\end{definition}

\begin{remark}
	Take $D$ to be a domain with field of fractions $K$. Let $E$ be a subset of $K$. Then 
	\begin{enumerate}
		\item $\IntR(E,D)$ has the Skolem property if and only if the Skolem closure of a finitely-generated proper ideal of $\IntR(E,D)$ is a proper ideal, and
		\item $\IntR(E,D)$ has the strong Skolem property if and only if every finitely-generated ideal of $\IntR(E,D)$ is Skolem closed. 
	\end{enumerate}
\end{remark}

In Section \ref{Sect:StrongSkolem}, we give an example of a domain $D$ such that $\IntR(D)$ has the Skolem property but not the strong Skolem property.

In this work, we relate the Skolem property and related properties to the prime spectrum of $\IntR(E,D)$. The following are some definitions concerning the prime spectrum of $\IntR(E,D)$.

\begin{definition}
	Let $D$ be a domain with field of fractions $K$. Take $E$ to be some subset of $K$. For any $a \in E$ and $\p \in \Spec(D)$, we can define
	\[
		\P_{\p, a} \coloneqq \{\varphi \in \IntR(E,D) \mid \varphi(a) \in \p \}.
	\]
	The set $\P_{\p, a}$ is a prime ideal of $\IntR(E,D)$ since $\IntR(E,D)/\P_{\p,a} \cong D/\p$. Prime ideals of $\IntR(E,D)$ of the form $\P_{\p, a}$ for some $a \in E$ and $\p \in \Spec(D)$ are called \textbf{pointed prime ideals}. If $\m$ is a maximal ideal of $D$, then we write $\M_{\m,a}$ for $\P_{\m,a}$. Ideals of $\IntR(E,D)$ of the form $\M_{\m,a}$ for some $a \in E$ and $\m \in \Max(D)$, where $\Max(D)$ is the set of all maximal ideals of $D$, are called \textbf{maximal pointed ideals}. The maximal pointed ideals are indeed maximal ideals of $\IntR(E,D)$.
\end{definition}

Pointed prime ideals and maximal pointed ideals in general do not describe all of the prime ideals or all of the maximal ideals of a ring in the form $\IntR(E,D)$. Some prime ideals of $\IntR(E,D)$ are described through ultrafilters. 

\begin{definition}
	Let $S$ be a set. A \textbf{filter} $\mathcal{F}$ on $S$ is a collection of subsets of $S$ such that
	\begin{enumerate}
		\item $\emptyset \notin \mathcal{F}$;
		\item if $A, B \in \mathcal{F}$, then $A \cap B \in \mathcal{F}$; and
		\item if $A \in \mathcal{F}$ and $B \subseteq S$ is such that $A \subseteq B$, then $B \in \mathcal{F}$. 
	\end{enumerate}
	If $\mathcal{U}$ is a filter on $S$ such that for every $A \subseteq S$, we have $A \in \mathcal{U}$ or $S \setminus A \in \mathcal{U}$, then we call $\mathcal{U}$ an \textbf{ultrafilter}. Every filter of $S$ is contained in some ultrafilter of $S$ by the Ultrafilter Lemma. 
\end{definition}

Through the use of filters and ultrafilters, we can obtain a notion of a limit of a family of ideals.

\begin{definition}
	Let $R$ be a commutative ring. Take $\{I_\lambda\}_{\lambda \in \Lambda}$ to be a family of ideals of $R$. For each $r \in R$, we define the \textbf{characteristic set of $r$ on $\{I_\lambda\}$} to be
	\[
		\chi_r = \{I_\lambda \mid r \in I_\lambda \}.
	\]
	For a filter $\mathcal{F}$ on $\{I_\lambda\}$, we define the \textbf{filter limit of $\{I_\lambda\}$ with respect to $\mathcal{F}$} as
	\[
		\lim\limits_{\mathcal{F}} I_\lambda = \{r \in R \mid \chi_r \in \mathcal{F} \}.
\]
If $\mathcal{F}$ is an ultrafilter, we call $\lim\limits_{\mathcal{F}} I_\lambda$ the \textbf{ultrafilter limit of $\{I_\lambda\}$ with respect to $\mathcal{F}$}.
\end{definition}

\begin{remark}
	The filter and ultrafilter limits of a family of ideals are also themselves ideals. If $\{\p_\lambda\}_{\lambda \in \Lambda}$ is a family of prime ideals and $\mathcal{U}$ is an ultrafilter of $\{\p_\lambda\}$, then the ultrafilter limit $\lim\limits_{\mathcal{U}} \p_\lambda$ is also a prime ideal. This gives rise to the \textbf{ultrafilter topology} on $\Spec(R)$, which is identical to the patch topology and the constructible topology on $\Spec(R)$ \cite{UltrafilterTopology}. 
\end{remark}

We now define star operations gives some examples and properties related to star operations. This will later assist us in extending the Skolem property. Furthermore, under certain conditions, we can derive a star operation from the Skolem closure. 

\begin{definition}
	Let $D$ be a domain with field of fractions $K$. Denote by $F(D)$ the set of nonzero fractional ideals of $D$. A map $F(D) \to F(D)$ given by $I \mapsto I^\star$ is a \textbf{star operation on $D$} if for every $a \in K$ and all $I, J \in F(D)$, we have
	\begin{enumerate}
		\item $(a)^\star = (a)$, $(aI)^\star = aI^\star$;
		\item $I \subseteq I^\star$, if $I \subseteq J$, then $I^\star \subseteq J^\star$; and
		\item $(I^\star)^\star = I^\star$
	\end{enumerate}
	
	We say that $I$ is a \textbf{$\star$-ideal} if $I^\star = I$. We say two star operations $\star_1$ and $\star_2$ on $D$ are equal, denoted as $\star_1 = \star_2$ if for all $I \in F(D)$, we have $I^{\star_1} = I^{\star_2}$. We say that $\star_1 \leq \star_2$ if for all $I \in F(D)$, we have $I^{\star_1} \subseteq I^{\star_2}$. 
\end{definition}

\begin{example}
	Let $D$ be a domain. The $d$-operation given by $I_d = I$ for all nonzero fractional ideals of $D$ is a star operation. 
\end{example}

\begin{example}\cite{WangMcCasland}
	Let $D$ be a domain with field of fractions $K$. A nonzero fractional ideal $J$ of $D$ is called a GV-ideal if $J$ is finitely-generated and $J^{-1} = R$. The $w$-operation is a star operation given by
	\[
		I_w = \{a \in K \mid \text{there exists a GV-ideal $J$ such that $aJ \subseteq I$}  \}
	\]
	for each $I \in F(D)$. 
\end{example}

\begin{example}
	Let $D$ be a domain. Take a nonzero fractional ideal $I$ of $D$. The divisorial closure operation $v$ given by $I_v = (I^{-1})^{-1}$ is a star operation. Alternatively, $I_v$ is the intersection of the nonzero principal fractional ideals of $D$ containing $I$. 
\end{example}

\begin{example}
	Let $D$ be a domain and $I$ be a nonzero fractional ideal of $D$. The $t$-operation given by
	\[
		I_t = \bigcup_{J \in \Lambda_I} J_v,
	\]
	where $\Lambda_I$ is the family of all nonzero finitely-generated fractional ideals of $D$ contained in $I$, is a star operation. 
\end{example}

\begin{remark}
	For any domain $D$, we have $d \leq w \leq t \leq v$ \cite{Mimouni}. Moreover, for any star operation $\star$ on $D$, we have $\star \leq v$. 
\end{remark}

In general, we can do the same procedure as the one to obtain the $t$-operation from the $v$-operation on any star operation to obtain another star operation.

\begin{definition}
	Let $D$ be a domain and $\star$ be a star operation on $D$. We define the star operation $\star_f$ on $D$ by taking a nonzero fractional ideal $I$ of $D$ and assigning
	\[
		I^{\star_f} = \bigcup_{J \in \Lambda_I} J^\star,
	\]
	where $\Lambda_I$ is the family of all nonzero finitely-generated fractional ideals of $D$ contained in $I$. Note that $\star_f \leq \star$. 
	
	If $\star = \star_f$, we say that $\star$ is a \textbf{finite type} star operation.
\end{definition}

\begin{example}
	The $t$-operation is a finite type star operation. In fact, for any finite type star operation $\star$ on a domain $D$, we have $\star \leq t$. 
\end{example}

Finite type star operations are important as they behave nicely with filter limits.
\begin{proposition}\label{Prop:FiltersPreserveStar}
	Let $\star$ be a finite type star operation on $D$. Suppose that $\lim\limits_{\mathcal{F}} \mathfrak{a}_\lambda$ is the limit of a family $\{\mathfrak{a}_\lambda\}$ of $\star$-ideals of $D$ with respect to a filter $\mathcal{F}$. If $\lim\limits_{\mathcal{F}} \mathfrak{a}_\lambda$ is a nonzero ideal, then $\lim\limits_{\mathcal{F}} \mathfrak{a}_\lambda$ is a $\star$-ideal.
\end{proposition}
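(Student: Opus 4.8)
The plan is to prove the nontrivial containment $\bigl(\lim_{\mathcal{F}} \mathfrak{a}_\lambda\bigr)^\star \subseteq \lim_{\mathcal{F}} \mathfrak{a}_\lambda$, since the reverse inclusion is immediate from axiom (2) of a star operation. Write $\mathfrak{b} = \lim_{\mathcal{F}} \mathfrak{a}_\lambda$, which is a nonzero ideal by hypothesis and hence a nonzero fractional ideal to which $\star$ applies.

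First I would unpack the finite type hypothesis. By definition $\mathfrak{b}^\star = \bigcup_{J} J^\star$, where $J$ ranges over the nonzero finitely-generated fractional ideals of $D$ contained in $\mathfrak{b}$. Thus an arbitrary element $c \in \mathfrak{b}^\star$ lies in $J^\star$ for some $J = (c_1, \ldots, c_n)$ with every generator $c_i \in \mathfrak{b}$.

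Next I would exploit the filter axioms. Since each $c_i \in \mathfrak{b}$, its characteristic set $\chi_{c_i} = \{\mathfrak{a}_\lambda \mid c_i \in \mathfrak{a}_\lambda\}$ lies in $\mathcal{F}$. Because $\mathcal{F}$ is closed under finite intersection, the set $\bigcap_{i=1}^n \chi_{c_i} = \{\mathfrak{a}_\lambda \mid J \subseteq \mathfrak{a}_\lambda\}$ also lies in $\mathcal{F}$. Now for any index $\lambda$ with $J \subseteq \mathfrak{a}_\lambda$, monotonicity of $\star$ together with the fact that each $\mathfrak{a}_\lambda$ is a $\star$-ideal gives $J^\star \subseteq \mathfrak{a}_\lambda^\star = \mathfrak{a}_\lambda$, so in particular $c \in \mathfrak{a}_\lambda$. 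Hence $\bigcap_{i=1}^n \chi_{c_i} \subseteq \chi_c$, and upward closure of the filter forces $\chi_c \in \mathcal{F}$, that is, $c \in \mathfrak{b}$. This establishes $\mathfrak{b}^\star \subseteq \mathfrak{b}$ and completes the argument.

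I expect the only delicate point — really the conceptual heart rather than a genuine obstacle — to be the matching of the two finiteness conditions. Finite type lets us certify membership in $\mathfrak{b}^\star$ using only finitely many generators, and the finite-intersection axiom of a filter is exactly what is needed to keep the associated characteristic set inside $\mathcal{F}$. Without the finite type assumption one would be forced to intersect infinitely many characteristic sets, which a filter need not contain, so this is precisely the step where the hypotheses are genuinely used.
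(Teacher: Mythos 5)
Your proof is correct and follows essentially the same route as the paper: both arguments take a finitely-generated $J \subseteq \lim_{\mathcal{F}} \mathfrak{a}_\lambda$, intersect the characteristic sets of its generators to find indices $\lambda$ with $J \subseteq \mathfrak{a}_\lambda$, use $J^\star \subseteq \mathfrak{a}_\lambda^\star = \mathfrak{a}_\lambda$ and upward closure of the filter to conclude $J^\star \subseteq \lim_{\mathcal{F}} \mathfrak{a}_\lambda$, and then invoke the finite type hypothesis. Your closing remark about why finite type is indispensable matches exactly where the paper's proof uses it.
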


\begin{proof}
 Let $J \subseteq \lim\limits_{\mathcal{F}} \mathfrak{a}_\lambda$ be a nonzero finitely-generated ideal, generated by $a_1, \dots, a_n$. Let $\mathfrak{a}_\lambda \in \chi_{a_1} \cap \cdots \cap \chi_{a_n}$. This means that $J \subseteq \mathfrak{a}_\lambda$ and since $\mathfrak{a}_\lambda$ is a $\star$-ideal, we have that $J^\star \subseteq \mathfrak{a}_\lambda$. Thus, $\chi_{a_1} \cap \cdots \cap \chi_{a_n} \subseteq \chi_a$ for all $a \in J^\star$, which implies that $\chi_a \in \mathcal{F}$ for all $a \in J^\star$. This shows that $J^\star \subseteq \lim\limits_{\mathcal{F}} \mathfrak{a}_\lambda$. Since $\star$ is of finite type, we can conclude that $\left(\lim\limits_{\mathcal{F}} \mathfrak{a}_\lambda\right)^\star = \lim\limits_{\mathcal{F}} \mathfrak{a}_\lambda$. 
\end{proof}

Given a domain $D$, its field of fractions $K$, and a subset $E$ of $K$, it is not true in general that the Skolem closure $\Sk$ on $\IntR(E, D)$ is a star operation. For example, let $D = F[t]$, where $F$ is any field, and let $E = F$. Consider the ideal $I = (x^2)$ in $\IntR(E,D)$. We know that $I(a) = (0)$ if $a = 0$ and $I(a) = D$ if $a \in E \setminus \{0\}$. If we take $\varphi = x$, then $\varphi(a) \in I(a)$ for all $a \in E$. However, $x \notin (x^2)$. Otherwise, $x = x^2 \psi$ for some $\psi \in \IntR(E,D)$ and then $\frac{1}{x} \in \IntR(E,D)$, a contradiction since $\frac{1}{x}$ is undefined evaluated at $x = 0$. This shows that the principal ideal $(x^2)$ is not Skolem closed, so $\Sk$ is not a star operation in this case. However, notice that in this example that for $\psi = \frac{1}{x}$, we have $\psi(a) \in I(a)$ for all $a \in E \setminus \{0\}$. The obstruction is only when we evaluate $\frac{1}{x}$ at $x = 0$. Thus, we should consider sets robust enough to handle finitely many obstructions. 

\begin{definition}\cite{CahenChabertSkolem}
	Let $D$ be a domain with field of fractions $K$. A subset $E$ of $K$ is a \textbf{strongly coherent set} for $\IntR(E,D)$ if for every finitely-generated ideal $I$ of $\IntR(E,D)$ and every rational function $\varphi \in K(x)$ such that $\varphi(a) \in I(a)$ for all but finitely many $a \in E$, we have $\varphi \in I^{\Sk}$. 
\end{definition}

For the purposes of the Skolem and strong Skolem properties, we are only considering finitely-generated ideals. It turns out if we apply the finite type construction to $\Sk$, we get a finite type star operation, which behaves nicely with filter limits according to Proposition \ref{Prop:FiltersPreserveStar} and is still descriptive of the Skolem and strong Skolem properties. 

\begin{definition}
	Let $D$ be a domain with $E$ a subset of the field of fractions. For a nonzero integral ideal $I$ of $\IntR(E,D)$, we define
	\[
	I^{\Sk_f} = \bigcup_{J \in \Lambda_I} J^{\Sk},
	\]
	where $\Lambda_I$ is the family of all nonzero finitely-generated ideals of $\IntR(E,D)$ contained in $I$. The operation $\Sk_f$ extends uniquely to the nonzero fractional ideals of $\IntR(E,D)$.
\end{definition}

The following proof is similar to that of \cite[Theorem 4.2]{Steward}.
\begin{proposition}
	Let $D$ be a domain with field of fractions $K$. Let $E \subseteq K$ such that $E$ is a strongly coherent set for $\IntR(E,D)$. Then $\Sk_f$ is a finite-type star operation on $\IntR(E,D)$. 
\end{proposition}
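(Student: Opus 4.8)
The plan is to verify the three star-operation axioms for $\Sk_f$ together with the finite-type condition, reducing everything to integral ideals and invoking strong coherence only at the one place where it is genuinely needed. I would begin by recording the elementary closure properties of the Skolem closure $\Sk$ on integral ideals of $R := \IntR(E,D)$: it is extensive ($I \subseteq I^{\Sk}$) and order-preserving because $I(a) \subseteq J(a)$ whenever $I \subseteq J$, and it satisfies $I^{\Sk}(a) = I(a)$ for every $a \in E$ (the inclusion $\supseteq$ is extensivity, and $\subseteq$ is immediate from the definition of $I^{\Sk}$). Idempotence of $\Sk$ then follows at once: $(I^{\Sk})^{\Sk} = \{\varphi \mid \varphi(a) \in I^{\Sk}(a)\ \forall a\} = \{\varphi \mid \varphi(a) \in I(a)\ \forall a\} = I^{\Sk}$.

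Next I would dispatch the formal finite-type bookkeeping, none of which requires strong coherence. For a finitely generated ideal $J$ one has $J^{\Sk_f} = J^{\Sk}$, since $J \in \Lambda_J$ while every $J' \in \Lambda_J$ satisfies $(J')^{\Sk} \subseteq J^{\Sk}$ by monotonicity; consequently $I^{\Sk_f} = \bigcup_{J \in \Lambda_I} J^{\Sk} = \bigcup_{J \in \Lambda_I} J^{\Sk_f}$, which is exactly the finite-type identity. Extensivity and monotonicity of $\Sk_f$ are inherited from $\Sk$, and idempotence is proved by the standard refinement argument: given a finitely generated $J' \subseteq I^{\Sk_f}$ with generators $g_1, \dots, g_n$, each $g_i$ lies in $J_i^{\Sk}$ for some finitely generated $J_i \subseteq I$, so with $J = J_1 + \cdots + J_n \subseteq I$ one gets $J' \subseteq J^{\Sk}$ and hence $(J')^{\Sk} \subseteq (J^{\Sk})^{\Sk} = J^{\Sk} \subseteq I^{\Sk_f}$; taking the union over all such $J'$ yields $(I^{\Sk_f})^{\Sk_f} \subseteq I^{\Sk_f}$, and the reverse inclusion is extensivity.

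The heart of the argument, and the only step that uses the strongly coherent hypothesis, is the multiplicativity axiom $(\varphi I)^{\Sk_f} = \varphi I^{\Sk_f}$ for $\varphi \in R$ and $I$ an integral ideal. I would first prove it for finitely generated $I$. The inclusion $\varphi I^{\Sk} \subseteq (\varphi I)^{\Sk}$ is immediate from $(\varphi \psi)(a) = \varphi(a)\psi(a) \in \varphi(a) I(a) = (\varphi I)(a)$. For the reverse inclusion, take $\eta \in (\varphi I)^{\Sk}$ and consider the rational function $\eta/\varphi \in K(x)$. At every $a \in E$ with $\varphi(a) \neq 0$ we have $(\eta/\varphi)(a) = \eta(a)/\varphi(a) \in I(a)$; since the nonzero rational function $\varphi$ has only finitely many zeros, this holds for all but finitely many $a \in E$, so strong coherence applied to the finitely generated ideal $I$ gives $\eta/\varphi \in I^{\Sk}$, whence $\eta \in \varphi I^{\Sk}$. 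This is precisely the point where the example $D = F[t],\ E = F$ fails (there $1/x$ witnesses a violation of strong coherence), so the hypothesis cannot be dropped. For arbitrary integral $I$ I would pass to $\Sk_f$: every finitely generated subideal of $\varphi I$ has the form $\varphi J$ with $J \subseteq I$ finitely generated, so $(\varphi I)^{\Sk_f} = \bigcup_J (\varphi J)^{\Sk} = \bigcup_J \varphi J^{\Sk} = \varphi I^{\Sk_f}$. Taking $I = R$ and noting $R^{\Sk_f} = R^{\Sk} = R$ gives $(\varphi)^{\Sk_f} = (\varphi)$.

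Finally I would extend $\Sk_f$ to nonzero fractional ideals by setting $I^{\Sk_f} := d^{-1}(dI)^{\Sk_f}$ for any nonzero $d \in R$ with $dI \subseteq R$; the multiplicativity just established shows this is independent of $d$ (if $d_1 I, d_2 I \subseteq R$ then $d_2(d_1 I)^{\Sk_f} = (d_1 d_2 I)^{\Sk_f} = d_1 (d_2 I)^{\Sk_f}$), and all three axioms, now stated for $\varphi \in \Frac(R)$ and fractional $I$, follow from the integral case by clearing denominators; in particular $(\varphi)^{\Sk_f} = (\varphi R)^{\Sk_f} = \varphi R^{\Sk_f} = (\varphi)$. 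I expect the multiplicativity inclusion $(\varphi I)^{\Sk} \subseteq \varphi I^{\Sk}$ to be the one genuinely delicate step, because it is exactly there that the zeros of $\varphi$ create the finitely many ``bad'' evaluation points that strong coherence is designed to absorb; the remaining verifications are formal.
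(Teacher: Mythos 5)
Your proposal is correct and follows essentially the same route as the paper: reduce to integral ideals, prove $(\varphi I)^{\Sk} = \varphi I^{\Sk}$ for finitely generated $I$ by applying strong coherence to $\psi/\varphi$ away from the finitely many zeros of $\varphi$, and then pass to arbitrary ideals via the observation that finitely generated subideals of $\varphi I$ have the form $\varphi J$. The only difference is that you spell out the routine verifications (idempotence of $\Sk_f$ via the refinement $J = J_1 + \cdots + J_n$, and the extension to fractional ideals) that the paper dispatches with ``we can verify.''
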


\begin{proof}
	It suffices to check the properties of a star operation on integral ideals \cite[p. 393]{Gilmer}. We begin by letting $\varphi \in K(x)$ be a nonzero element and $I$ be a nonzero finitely-generated integral ideal of $\IntR(E,D)$ and showing that $(\varphi I)^{\Sk} = \varphi I^{\Sk}$. We immediately have $\varphi I^{\Sk} \subseteq (\varphi I)^{\Sk}$. Now let $\psi \in (\varphi I)^{\Sk}$. Then for all $a \in E$ except the finitely many zeroes of $\varphi$, we have $\psi(a) \in \varphi(a)I(a)$ and consequently, $\frac{\psi(a)}{\varphi(a)} \in I(a)$. Since $E$ is a strongly coherent set of $\IntR(E,D)$, we obtain $\frac{\psi}{\varphi} \in I^{\Sk}$. Therefore, $\psi \in \varphi I^{\Sk}$, so we have $(\varphi I)^{\Sk} = \varphi I^{\Sk}$ for a nonzero $\varphi \in K(x)$ and a nonzero finitely-generated integral ideal $I$ of $\IntR(E,D)$. 
	
	Let $\varphi \in \IntR(E,D)$ be a nonzero element and let $I$ be a nonzero integral ideal of $\IntR(E,D)$. Any finitely-generated ideal contained in $\varphi I$ is of the form $\varphi J$ for some finitely-generated ideal $J$ of $\IntR(E,D)$ contained in $I$. Therefore, 
	\[
	(\varphi I)^{\Sk_f} = \bigcup_{J \in \Lambda_I} (\varphi J)^{\Sk} = \bigcup_{J \in \Lambda_I} \varphi J^{\Sk} = \varphi I^{\Sk_f},
	\]
	where $\Lambda_I$ is the family of all nonzero finitely-generated ideals of $\IntR(E,D)$ contained in $I$. 
	
	Next, let $I$ and $J$ be nonzero integral ideals of $\IntR(E,D)$. We can verify that $I\subseteq I^{\Sk_f}$ and if $I \subseteq J$, then $I^{\Sk_f} \subseteq J^{\Sk_f}$. Lastly, $(I^{\Sk_f})^{\Sk_f} = I^{\Sk_f}$. Therefore, $\Sk_f$ is indeed a star operation on $\IntR(E,D)$. Moreover, due to the definition of $\Sk_f$, we have $(\Sk_f)_f = \Sk_f$ and thus $\Sk_f$ is finite type. 
\end{proof}

However, using the notion of a strongly coherent set, defined in \cite{CahenChabertSkolem}, we can determine that if $E$ is a strongly coherent set for $\IntR(E,D)$, then $\Sk$ is a star operation for $\IntR(E,D)$. 

\begin{definition}
	Let $D$ be a domain with field of fractions $K$. A subset $E$ of $K$ is a \textbf{strongly coherent set} for $\IntR(E,D)$ if for every finitely-generated ideal $I$ of $\IntR(E,D)$ and every rational function $\varphi \in K(x)$ such that $\varphi(a) \in I(a)$ for all but finitely many $a \in E$, we have $\varphi \in I^{\Sk}$. 
\end{definition}

\begin{proposition}
	Let $D$ be a domain with field of fractions $K$. Let $E \subseteq K$ such that $E$ is a strongly coherent set for $\IntR(E,D)$. Then $\Sk$ is a star operation on $\IntR(E,D)$. 
\end{proposition}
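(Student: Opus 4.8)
The plan is to verify the three star-operation axioms directly for $\Sk$, invoking the reduction of \cite[p.~393]{Gilmer} so that it suffices to check them on nonzero integral ideals of $\IntR(E,D)$. Two of the three axioms need no hypothesis on $E$. Extensivity and monotonicity are immediate from the definition: if $\varphi \in I$ then $\varphi(a) \in I(a)$ for all $a \in E$, so $I \subseteq I^{\Sk}$, and $I \subseteq J$ forces $I(a) \subseteq J(a)$ and hence $I^{\Sk} \subseteq J^{\Sk}$. For idempotence I would first record the clean observation that $I^{\Sk}$ has the same value ideals as $I$: since every $\eta \in I^{\Sk}$ satisfies $\eta(a) \in I(a)$ we get $I^{\Sk}(a) \subseteq I(a)$, while $I \subseteq I^{\Sk}$ gives the reverse inclusion, so $I^{\Sk}(a) = I(a)$ for every $a \in E$. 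Feeding this back into the definition yields $(I^{\Sk})^{\Sk} = \{\psi \mid \psi(a) \in I^{\Sk}(a)\ \forall a\} = \{\psi \mid \psi(a) \in I(a)\ \forall a\} = I^{\Sk}$, with no appeal to strong coherence.

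The whole content therefore lies in the multiplicativity axiom $(\varphi I)^{\Sk} = \varphi I^{\Sk}$, and here I would follow the template of the preceding proposition for $\Sk_f$. After the reduction to integral ideals we may take $\varphi \in \IntR(E,D)$, so no poles intervene, and the inclusion $\varphi I^{\Sk} \subseteq (\varphi I)^{\Sk}$ is direct: $\eta(a) \in I(a)$ gives $\varphi(a)\eta(a) \in \varphi(a) I(a) = (\varphi I)(a)$. For the reverse inclusion, take $\psi \in (\varphi I)^{\Sk}$, so that $\psi(a) \in \varphi(a) I(a)$ for every $a \in E$; dividing by $\varphi$ gives $\tfrac{\psi}{\varphi}(a) \in I(a)$ for all $a \in E$ away from the finitely many zeros of $\varphi$. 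The strong coherence of $E$ is exactly designed to absorb these finitely many exceptional points: it upgrades ``$\tfrac{\psi}{\varphi}(a) \in I(a)$ for all but finitely many $a$'' to $\tfrac{\psi}{\varphi} \in I^{\Sk}$, whence $\psi = \varphi \cdot \tfrac{\psi}{\varphi} \in \varphi I^{\Sk}$. Specializing to $I = \IntR(E,D)$, which is principal, also gives $(\varphi)^{\Sk} = (\varphi)$, completing axiom~(1).

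The step I expect to be the main obstacle is precisely this application of strong coherence, because the definition of a strongly coherent set only guarantees the upgrade for \emph{finitely generated} ideals $I$, whereas the star-operation axioms must hold for every integral ideal. For finitely generated $I$ the argument above is immediate, so the crux is to reduce the general integral case to the finitely generated one: given $\psi \in (\varphi I)^{\Sk}$, I would try to produce a finitely generated subideal $J \subseteq I$ for which $\tfrac{\psi}{\varphi}(a) \in J(a)$ holds for all but finitely many $a \in E$, since then strong coherence applied to $J$ yields $\tfrac{\psi}{\varphi} \in J^{\Sk} \subseteq I^{\Sk}$ by monotonicity. Selecting such generators---capturing the relevant values of $\tfrac{\psi}{\varphi}$ off a finite set while remaining inside $I$---is the delicate point, and it is exactly where the finite-type closure $\Sk_f$ and the machinery of the preceding proposition are expected to do the work. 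Once this reduction is in place, the three axioms assemble into the statement that $\Sk$ is a star operation on $\IntR(E,D)$.
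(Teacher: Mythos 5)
Your proposal follows the same route as the paper's proof: reduce to integral ideals via \cite[p.~393]{Gilmer}, observe that extensivity, monotonicity, and idempotence are formal (your observation that $I^{\Sk}(a)=I(a)$ for all $a\in E$, which gives idempotence cleanly, is actually more explicit than the paper, which just asserts $(I^{\Sk})^{\Sk}=I^{\Sk}$), and concentrate all the work in $(\varphi I)^{\Sk}=\varphi I^{\Sk}$, where strong coherence absorbs the finitely many zeros of $\varphi$. The one substantive divergence is that the paper allows $\varphi\in K(x)$ nonzero rather than $\varphi\in\IntR(E,D)$, which is what the reduction to integral ideals really requires; but your argument goes through verbatim for such $\varphi$, so this is cosmetic.

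The step you flag as the main obstacle --- that the definition of a strongly coherent set only licenses the upgrade ``$\tfrac{\psi}{\varphi}(a)\in I(a)$ for all but finitely many $a$'' $\Rightarrow$ ``$\tfrac{\psi}{\varphi}\in I^{\Sk}$'' when $I$ is \emph{finitely generated}, whereas here $I$ is an arbitrary integral ideal --- is not a gap that the paper fills. The paper's proof applies strong coherence directly to a general integral ideal $I$ without comment, so you have located a point where the published argument is itself glib rather than a point where you are missing the author's idea. Your proposed repair (find a single finitely generated $J\subseteq I$ with $\tfrac{\psi}{\varphi}(a)\in J(a)$ off a finite set) is indeed the natural one, but it is not obviously achievable: for each $a$ one gets a witness $\rho_a\in I$ with $\rho_a(a)=\tfrac{\psi(a)}{\varphi(a)}$, and there is no evident way to capture infinitely many such witnesses in one finitely generated subideal. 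This difficulty is precisely why the companion result in the paper works with $\Sk_f$, whose definition as a union over finitely generated subideals builds the reduction in for free; for the present proposition the honest options are either to strengthen the definition of strongly coherent to arbitrary ideals or to supply the missing reduction, and neither you nor the paper does so. So your write-up is not weaker than the paper's; it is the same proof with the soft spot labeled.
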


\begin{proof}
	It suffices to check the properties of a star operation on integral ideals \cite[p. 393]{Gilmer}. We begin by letting $\varphi \in K(x)$ be a nonzero element and $I$ be a nonzero integral ideal of $\IntR(E,D)$ and showing that $(\varphi I)^{\Sk} = \varphi I^{\Sk}$. We immediately have $\varphi I^{\Sk} \subseteq (\varphi I)^{\Sk}$. Now let $\psi \in (\varphi I)^{\Sk}$. Then for all $a \in E$ except the finitely many zeroes of $\varphi$, we have $\psi(a) \in \varphi(a)I(a)$ and consequently, $\frac{\psi(a)}{\varphi(a)} \in I(a)$. Since $E$ is a strongly coherent set of $\IntR(E,D)$, we obtain $\frac{\psi}{\varphi} \in I^{\Sk}$. Therefore, $\psi \in \varphi I^{\Sk}$, so we have $(\varphi I)^{\Sk} = \varphi I^{\Sk}$ for a nonzero $\varphi \in K(x)$ and a nonzero integral ideal $I$ of $\IntR(E,D)$. 
	
	Finally, for any nonzero integral ideals $I, J$ of $\IntR(E,D)$ we have $I \subseteq J$ implying $I^{\Sk} \subseteq J^{\Sk}$ and $(I^{\Sk})^{\Sk} = I^{\Sk}$. Thus, ${}^{\Sk}$ is a star operation. 
\end{proof}

A use of star operations is to get a finer look at the ideals of a ring. We can get a finer look at the prime ideals of a ring as well. 

\begin{definition}
	Let $D$ be a domain with a star operation $\star$. If $I$ is a $\star$-ideal and a prime ideal of $D$, then we say that $I$ is a \textbf{$\star$-prime}. If $I$ is maximal among all $\star$-ideals with respect to inclusion, then we say that $I$ is a \textbf{$\star$-maximal} ideal. We denote by $\star-\Max(D)$ the set of all $\star$-maximal ideals of $D$. 
\end{definition}

\begin{remark}\cite{GabelliRoitman, Jaffard}
	A $\star$-maximal ideal, if it exists, is a $\star$-prime ideal. If $\star$ is of finite type, then $\star-\Max(D)$ is not empty. 
\end{remark}

Lastly, we also need the tools of the minimum valuation functions and local polynomials from \cite{Liu}. This allows us to consider a lot of the monomial valuations on a field of rational functions at once.

\begin{definition}
	Let $V$ be a valuation domain with value group $\Gamma$, valuation $v$, and field of fractions $K$. Take a nonzero polynomial $f \in K[x]$ and write it as $f(x) = a_nx^n + \cdots + a_1x+ a_0$ for $a_0, a_1, \dots, a_n \in K$. We define the \textbf{minimum valuation function of $f$} as $\minval_{f,v}: \Gamma \to \Gamma $ by \[\gamma \mapsto \min\{v(a_0), v(a_1)+\gamma, v(a_2)+2\gamma, \dots, v(a_n) + n\gamma\}\] for each $\gamma \in \Gamma$. We will denote $\minval_{f,v}$ as $\minval_f$ if the valuation $v$ is clear from context. Let $\Q\Gamma \coloneqq \Gamma \otimes_\Z \Q$. It is oftentimes helpful to think of $\minval_f$ as a function from $\Q{\Gamma}$ to $\Q{\Gamma}$ defined $\gamma \mapsto \min\{v(a_0), v(a_1)+\gamma, v(a_2)+2\gamma, \dots, v(a_n) + n\gamma\}$ for each $\gamma \in \Q{\Gamma}$. For a nonzero rational function $\varphi \in K(x)$, we may write $\varphi = \frac{f}{g}$ for some $f, g \in K[x]$ and define $\minval_\varphi = \minval_f - \minval_g$.

	In the same setup, taking $t \in K$, we can define the \textbf{local polynomial of $f$ at $t$} to be \[\loc_{f, v, t}(x) = \frac{f(tx)}{a_dt^d} \mod \m,\] where $\m$ is the maximal ideal of $V$ and $d = \max\{i \in \{0, 1, \dots, n\} \mid v(a_i) + iv(t) = \minval_f(v(t)) \}$. Again, we may omit the valuation $v$ in $\loc_{f, v, t}(x)$ and write $\loc_{f, t}(x)$ if the valuation is clear from the context.
\end{definition}

\section{The Skolem property and the maximal spectrum}\label{Sect:SkolemSpectrum}

\indent\indent In this section, we establish a connection between the Skolem property and the description of the maximal spectrum through ultrafilters. We then generalize a result about when a domain has a maximal spectrum that admits such a description. This will determine that the Skolem property holds for a family of rings of integer-valued rational functions larger than the family of Prüfer domains of integer-valued rational functions.

\begin{theorem}\label{Thm:UltraSkolem}
	Let $D$ be a domain with field of fractions $K$ and $E$ be a nonempty subset of $K$. Let $\Lambda$ be a dense subset of $\Max(D)$ with respect to the ultrafilter topology. Also define the family $\Pi = \{\M_{\m,a} \mid \m \in \Lambda, a \in E \}$. For each $\varphi \in \IntR(E,D)$, we define $\chi_\varphi = \{\M_{\m,a} \in \Pi \mid \varphi \in \M_{\m,a} \}$, and for an ideal $I \subseteq \IntR(E,D)$, we define $\chi_I = \{\chi_\varphi \mid \varphi \in I \}$. Then the following are equivalent:
	\begin{enumerate}
		\item $\chi_I$ has the finite intersection property for all proper ideals $I \subsetneq \IntR(E,D)$.
		\item  $\chi_I$ has the finite intersection property for all finitely-generated proper ideals $I \subsetneq \IntR(E,D)$. 
		\item  $\Pi$ is a dense subset of $\Max(\IntR(E,D))$ with respect to the ultrafilter topology.
		\item  $\IntR(E,D)$ has the Skolem property.
	\end{enumerate}
	
\end{theorem}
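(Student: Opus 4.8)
The plan is to prove the four conditions equivalent by establishing $(1)\Leftrightarrow(2)$, $(2)\Leftrightarrow(4)$, and $(1)\Leftrightarrow(3)$, so that everything is tied together through $(1)$ and $(2)$. The density hypothesis on $\Lambda$ will be used only in the passage between the combinatorial condition $(2)$ and the Skolem property $(4)$.

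First I would record the basic dictionary translating membership in the $\chi_\varphi$ into value ideals. For a finitely-generated ideal $I = (\varphi_1, \dots, \varphi_n)$ one checks $I(a) = (\varphi_1(a), \dots, \varphi_n(a))$ as an ideal of $D$, so that $\M_{\m,a} \in \chi_{\varphi_1} \cap \cdots \cap \chi_{\varphi_n}$ if and only if $I \subseteq \M_{\m,a}$, if and only if $I(a) \subseteq \m$ with $\m \in \Lambda$ and $a \in E$. Moreover, if some $\M_{\m,a} \in \Pi$ contains all the generators of $I$ then it contains $I$, hence lies in $\chi_\varphi$ for every $\varphi \in I$; this shows that for finitely-generated $I$ the family $\chi_I$ has the finite intersection property exactly when $\chi_{\varphi_1} \cap \cdots \cap \chi_{\varphi_n} \neq \emptyset$. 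With this in hand, $(1)\Leftrightarrow(2)$ is immediate: $(1)\Rightarrow(2)$ is trivial, and for $(2)\Rightarrow(1)$, given a proper ideal $I$ and finitely many $\varphi_1, \dots, \varphi_n \in I$, the finitely-generated ideal $(\varphi_1, \dots, \varphi_n)$ is proper, so $(2)$ forces $\chi_{\varphi_1} \cap \cdots \cap \chi_{\varphi_n} \neq \emptyset$.

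For $(2)\Leftrightarrow(4)$ I would argue through the dictionary. The direction $(2)\Rightarrow(4)$ is direct: a finitely-generated proper $I$ has $\chi_I$ with the finite intersection property, producing $\M_{\m,a} \supseteq I$ with $\m \in \Lambda \subseteq \Max(D)$, whence $I(a) \subseteq \m \subsetneq D$, so not every value ideal of $I$ equals $D$, which is exactly the Skolem property in contrapositive form. The converse $(4)\Rightarrow(2)$ is where density enters. Given a finitely-generated proper $I$, the Skolem property yields $a \in E$ with $I(a) \subseteq \m'$ for some $\m' \in \Max(D)$; since $\Lambda$ is dense in $\Max(D)$ in the ultrafilter topology, $\m'$ is an ultrafilter limit $\lim_{\mathcal{U}} \m_\lambda$ of a family $\{\m_\lambda\} \subseteq \Lambda$. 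Each of the finitely many conditions $\varphi_i(a) \in \m'$ says $\{\lambda : \varphi_i(a) \in \m_\lambda\} \in \mathcal{U}$, and since $\mathcal{U}$ is closed under finite intersections these sets share a common index $\lambda_0$; then $I(a) \subseteq \m_{\lambda_0}$ with $\m_{\lambda_0} \in \Lambda$, giving $I \subseteq \M_{\m_{\lambda_0}, a} \in \Pi$ and hence the finite intersection property for $\chi_I$. This ``pullback through the ultrafilter'' of finitely many conditions to a single index is the technical heart of the argument and the reason the ultrafilter topology is the correct density notion.

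Finally, for $(1)\Leftrightarrow(3)$ I would use that, in the ultrafilter topology, the closure of a set of primes is precisely the set of its ultrafilter limits. For $(3)\Rightarrow(1)$, a proper $I$ sits inside some maximal $\M$, which by density is $\lim_{\mathcal{U}} \M_{\m_\lambda, a_\lambda}$ for a family drawn from $\Pi$; the same finite-intersection pullback applied to finitely many $\varphi_1, \dots, \varphi_n \in I \subseteq \M$ produces an index $\lambda_0$ with all $\varphi_i \in \M_{\m_{\lambda_0}, a_{\lambda_0}} \in \Pi$, giving the finite intersection property. For $(1)\Rightarrow(3)$, take any maximal ideal $\M$; by $(1)$ the family $\{\chi_\varphi : \varphi \in \M\}$ has the finite intersection property, so by the Ultrafilter Lemma it extends to an ultrafilter $\mathcal{U}$ on $\Pi$. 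Viewing $\Pi$ as its own index set, the limit $\lim_{\mathcal{U}} \M_{\m,a} = \{\varphi : \chi_\varphi \in \mathcal{U}\}$ is a prime ideal containing $\M$, and it is proper because $\chi_1 = \emptyset \notin \mathcal{U}$; maximality of $\M$ then forces equality, exhibiting $\M$ as an ultrafilter limit of elements of $\Pi$. The main obstacle throughout is handling these ultrafilter limits correctly, especially ensuring the limit is a proper prime trapped beneath $\M$ in $(1)\Rightarrow(3)$ and executing the finite-intersection pullback in $(4)\Rightarrow(2)$ and $(3)\Rightarrow(1)$.
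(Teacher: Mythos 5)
Your proposal is correct and takes essentially the same route as the paper: the paper proves this statement as the special case $\star_1 = \star_2 = d$ of Theorem \ref{Thm:StarUltraSkolem}, whose proof uses exactly your decomposition --- $1 \Leftrightarrow 2$ by restricting to finitely many elements, $1 \Leftrightarrow 3$ by extending $\chi_{\M}$ to an ultrafilter and identifying the maximal ideal with the resulting ultrafilter limit, and $2 \Leftrightarrow 4$ via the finite-intersection pullback through the density of $\Lambda$. The dictionary $I(a) = (\varphi_1(a), \dots, \varphi_n(a))$ and the properness observation $\chi_1 = \emptyset \notin \mathcal{U}$ are exactly the points the paper's argument relies on as well.
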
 

This is a special case of Theorem \ref{Thm:StarUltraSkolem} with $\star_1 = d$ and $\star_2 = d$. We will prove the general case in the next section.

We utilize Theorem \ref{Thm:UltraSkolem} to establish the Skolem property for a domain of the form $\IntR(E,D)$ by determining if all of the maximal ideals of $\IntR(E,D)$ are ultrafilter limits of some family of pointed maximal ideals. The following proposition offers an instance of when the maximal ideals admit this description.

\begin{proposition}\cite[Proposition 2.8]{PvMD}\label{Prop:tUltra}
	Let $D$ be the intersection $D = \bigcap\limits_{\lambda \in \Lambda} V_\lambda$ of a family of valuation domains. For each $\lambda \in \Lambda$, denote by $\p_\lambda$, the center in $D$ of the maximal ideal of $V_\lambda$. 
	\begin{enumerate}
		\item If $I$ is a $t$-ideal of $D$, then $I$ is contained in the limit $\lim\limits_{\mathcal{F}} \p_\lambda$, of the family $\{\p_\lambda\}$, with respect to some filter $\mathcal{F}$. 
		\item If moreover $I$ is maximal, or if $I$ is $t$-maximal and every $V_\lambda$ is essential, i.e. $V_\lambda = D_{\p_\lambda}$, then $I = \lim\limits_{\mathcal{F}} \p_\lambda$. 
	\end{enumerate}
\end{proposition}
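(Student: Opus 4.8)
The plan is to exploit the identity $J^{-1} = \bigcap_\lambda (V_\lambda : J)$ coming from $D = \bigcap_\lambda V_\lambda$, and to produce the filter by checking a finite intersection property of characteristic sets. Throughout write $\m_\lambda$ for the maximal ideal of $V_\lambda$, so $\p_\lambda = \m_\lambda \cap D$, and for $r \in D$ set $\chi_r = \{\p_\lambda \mid r \in \p_\lambda\}$.

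For part (1), assume $I$ is a nonzero proper $t$-ideal and consider the collection $\mathcal{C} = \{\chi_r \mid r \in I\}$. The crux is to show $\mathcal{C}$ has the finite intersection property. Given $r_1, \dots, r_n \in I$, I would set $J = (r_1, \dots, r_n)$, a finitely-generated ideal inside $I$; since $I$ is a $t$-ideal, $J_v \subseteq J_t \subseteq I_t = I \subsetneq D$, so $J_v \neq D$, which forces $J^{-1} \neq D$ (otherwise $J_v = (J^{-1})^{-1} = D$). Now using $J^{-1} = (D : J) = \bigcap_\lambda (V_\lambda : J)$ — valid because $xJ \subseteq D$ iff $xJ \subseteq V_\lambda$ for every $\lambda$ — I would argue by contradiction: if $J V_\lambda = V_\lambda$ for all $\lambda$, then $(V_\lambda : J) = V_\lambda$ for all $\lambda$ and hence $J^{-1} = \bigcap_\lambda V_\lambda = D$. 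So some $\lambda$ has $JV_\lambda \neq V_\lambda$, i.e. $J \subseteq \m_\lambda$, whence $r_1, \dots, r_n \in \m_\lambda \cap D = \p_\lambda$ and $\p_\lambda \in \chi_{r_1} \cap \cdots \cap \chi_{r_n}$. Thus $\mathcal{C}$ has the finite intersection property and generates a proper filter $\mathcal{F}$; by construction $\chi_r \in \mathcal{F}$ for every $r \in I$, which is exactly $I \subseteq \lim\limits_{\mathcal{F}} \p_\lambda$.

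For part (2), note first that $L \coloneqq \lim\limits_{\mathcal{F}} \p_\lambda$ is a proper ideal containing $I$: it is proper because $\chi_1 = \emptyset \notin \mathcal{F}$, so $1 \notin L$. If $I$ is maximal, then $I \subseteq L \subsetneq D$ forces $I = L$ at once. If instead $I$ is $t$-maximal and every $V_\lambda$ is essential, I would first establish that each $\p_\lambda$ is itself a $t$-ideal: for a finitely-generated $J \subseteq \p_\lambda$, localize at $\p_\lambda$ and use that the inverse localizes for finitely-generated ideals, $(J^{-1})_{\p_\lambda} = (J_{\p_\lambda})^{-1}$, together with the general inclusion $(M^{-1})_{\p_\lambda} \subseteq (M_{\p_\lambda})^{-1}$, to get $(J_v)_{\p_\lambda} \subseteq (J_{\p_\lambda})_v$ computed in the valuation domain $V_\lambda = D_{\p_\lambda}$; there $J_{\p_\lambda} = JV_\lambda$ is principal, hence divisorial, and lies in $\m_\lambda$, so $(J_v)_{\p_\lambda} \subseteq \m_\lambda = \p_\lambda D_{\p_\lambda}$, and intersecting with $D$ gives $J_v \subseteq \p_\lambda$. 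With every $\p_\lambda$ a $t$-ideal and $L \neq 0$, Proposition \ref{Prop:FiltersPreserveStar} (as $t$ is of finite type) shows $L$ is a $t$-ideal, so $I \subseteq L \subsetneq D$ and $t$-maximality of $I$ yield $I = L$.

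The main obstacle is the essential case of part (2), namely verifying that essential primes are $t$-ideals; this is where the localization compatibility $(J_v)_{\p_\lambda} \subseteq (J_{\p_\lambda})_v$ of the $v$-operation must be handled carefully (it relies on $(D : J)$ localizing correctly, which needs $J$ finitely generated). By contrast, part (1) reduces cleanly to the computation $J^{-1} = \bigcap_\lambda (V_\lambda : J)$, and once the $\p_\lambda$ are known to be $t$-ideals the appeal to Proposition \ref{Prop:FiltersPreserveStar} is immediate.
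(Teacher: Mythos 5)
Your argument is correct, and part (1) is essentially the argument the paper itself uses: the paper cites this proposition from [PvMD] without proof, but its proof of the generalization (Proposition \ref{Prop:LocalIntersection}) runs exactly your computation --- a finitely generated $J\subseteq I$ not contained in any $\p_\lambda$ would have $J^{-1}\subseteq D_\lambda$ for every $\lambda$, hence $J_v=D$, contradicting $J_v\subseteq I\subsetneq D$ --- and then extends the characteristic sets to a filter. The one place you genuinely diverge is the essential case of part (2): the paper reduces it to the hypothesis ``every $\p_\lambda$ is a $t$-ideal'' and, in Corollary \ref{Cor:StarUltrafilter}, discharges that hypothesis by citing Kang's Lemma 3.17(1), whereas you prove it directly via the localization compatibility $(J_v)_{\p_\lambda}\subseteq (J_{\p_\lambda})_v$ for finitely generated $J$ together with the fact that $JV_\lambda$ is principal, hence divisorial, in the valuation domain $V_\lambda=D_{\p_\lambda}$. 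That self-contained argument is sound (the identity $(J^{-1})_{\p_\lambda}=(J_{\p_\lambda})^{-1}$ does require $J$ finitely generated, as you note, and that is all the $t$-operation ever asks for), so you have bought independence from the external lemma at the cost of a slightly longer proof; both routes then conclude identically via Proposition \ref{Prop:FiltersPreserveStar} and $t$-maximality.
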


\begin{remark}
	If $D$ is a domain and $E$ is a subset of the field of fractions such that $\IntR(E,D)$ is a Prüfer domain, then the previous proposition implies that $\IntR(E,D)$ has the Skolem property. We write
	\[
	\IntR(E,D) = \bigcap_{\m \in \Max(D), a \in E} \IntR(E,D)_{\M_{\m,a}}
	\]
	with each $\IntR(E,D)_{\M_{\m,a}}$ being a valuation domain. Furthermore, every maximal ideal of $\IntR(E,D)$ is a $t$-ideal and therefore an ultrafilter limit of $\{\M_{\m,a} \mid \m \in \Max(D), a \in E \}$ by Proposition \ref{Prop:tUltra}(2). Then Theorem \ref{Thm:UltraSkolem} implies that $\IntR(E,D)$ has the Skolem property. This is essentially the same proof as \cite[Lemma 4.1]{IntValuedRational} but in the language of ultrafilters. 
\end{remark}

We now generalize Proposition \ref{Prop:tUltra} in order to show the Skolem property for a larger family of rings of integer-valued rational functions. 

\begin{proposition}\label{Prop:LocalIntersection}
	Let $D$ be the intersection $D = \bigcap\limits_\lambda D_\lambda$ of a family of local overrings. For each $\lambda$, denote by $\p_\lambda$ the center in $D$ of the maximal ideal $\m_\lambda$ of $D_\lambda$. 
	\begin{enumerate}
		\item  If $I$ is a $t$-ideal of $D$, then $I$ is contained in $\lim\limits_{\mathcal{F}} \p_\lambda$ with respect to some filter $\mathcal{F}$ of $\{\p_\lambda\}$. 
		\item  If moreover $I$ is maximal, or if $I$ is $t$-maximal and every $\p_\lambda$ is a $t$-ideal, then $I = \lim\limits_{\mathcal{F}} \p_\lambda$.  
	\end{enumerate}
\end{proposition}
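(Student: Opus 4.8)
The plan is to reduce both parts to a single algebraic lemma relating the divisorial closure of a finitely generated ideal to its behavior in the overrings $D_\lambda$: if $J = (r_1, \dots, r_n)$ is a nonzero finitely generated ideal of $D$ with $JD_\lambda = D_\lambda$ for every $\lambda$, then $J_v = D$. To prove it I would take $x \in J^{-1}$, so that $xJ \subseteq D \subseteq D_\lambda$; then $xD_\lambda = x(JD_\lambda) = (xJ)D_\lambda \subseteq D_\lambda$, forcing $x \in D_\lambda$ for all $\lambda$ and hence $x \in \bigcap_\lambda D_\lambda = D$. Thus $J^{-1} = D$ and $J_v = (J^{-1})^{-1} = D$. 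What I actually use is the contrapositive: if $J_v \neq D$, then $JD_\lambda \neq D_\lambda$ for some $\lambda$, and since $D_\lambda$ is local this proper ideal lies in $\m_\lambda$, giving $J \subseteq JD_\lambda \cap D \subseteq \m_\lambda \cap D = \p_\lambda$. This is the only point at which the overring/locality structure enters, and it is exactly what replaces the valuation-theoretic computation underlying Proposition \ref{Prop:tUltra}.

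For part (1) I may assume $I$ is a proper $t$-ideal (if $I = D$ there is nothing to prove, since $1$ lies in no $\p_\lambda$). I would show that the family $\{\chi_r \mid r \in I\}$, with $\chi_r = \{\p_\lambda \mid r \in \p_\lambda\}$, has the finite intersection property. Given $r_1, \dots, r_n \in I$ not all zero, set $J = (r_1, \dots, r_n) \subseteq I$; because $I$ is a $t$-ideal, $J_v \subseteq I_t = I \subsetneq D$, so $J_v \neq D$, and the lemma yields a $\lambda$ with $J \subseteq \p_\lambda$, i.e. $\p_\lambda \in \chi_{r_1} \cap \cdots \cap \chi_{r_n}$. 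Hence this collection generates a filter $\mathcal{F}$ on $\{\p_\lambda\}$ with $\chi_r \in \mathcal{F}$ for every $r \in I$, which is precisely the assertion $I \subseteq \lim\limits_{\mathcal{F}} \p_\lambda$.

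For part (2), set $L \coloneqq \lim\limits_{\mathcal{F}} \p_\lambda$. In both cases $I$ is a $t$-ideal, so part (1) applies and $I \subseteq L$. The ideal $L$ is always proper, since $\chi_1 = \emptyset \notin \mathcal{F}$ gives $1 \notin L$. If moreover $I$ is maximal, then $I \subseteq L \subsetneq D$ forces $L = I$, with no further hypothesis needed. If instead $I$ is $t$-maximal and every $\p_\lambda$ is a $t$-ideal, then $L$ is a filter limit of $t$-ideals; as $I \subseteq L$ shows $L \neq 0$ and $t$ is a finite type star operation, Proposition \ref{Prop:FiltersPreserveStar} gives that $L$ is a $t$-ideal. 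Being a proper $t$-ideal containing the $t$-maximal ideal $I$, it must equal $I$. This is exactly where the condition ``every $\p_\lambda$ is a $t$-ideal'' takes over the role played by essentiality in Proposition \ref{Prop:tUltra}, since $V_\lambda = D_{\p_\lambda}$ is precisely what forces the centers to be $t$-ideals in the valuation setting.

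I expect the main obstacle to be the finite intersection property in part (1), which rests entirely on the lemma; once the passage from $J_v \neq D$ to $J \subseteq \p_\lambda$ is secured, the filter construction and both equality arguments are routine. I would also note that the hypothesis that $I$ be a $t$-ideal is genuinely necessary in the maximal case: a maximal ideal that is not a $t$-ideal (for instance a height-two maximal ideal of a Krull domain realized as an intersection of its essential localizations) admits a finitely generated $J \subseteq I$ with $J_v = D$ and $JD_\lambda = D_\lambda$ for all $\lambda$, so the finite intersection property fails and $I$ is contained in no such filter limit. A secondary point is the degenerate bookkeeping — the zero ideal, the case $I = D$, and ensuring the index family is nonempty so that $\bigcap_\lambda D_\lambda = D$ is meaningful — but these do not affect the substance of the argument.
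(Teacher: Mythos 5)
Your proposal is correct and follows essentially the same route as the paper: the key step in both is that a finitely generated $J$ with $J_v \subsetneq D$ must land in some $\p_\lambda$, proved by showing $J^{-1} \subseteq \bigcap_\lambda D_\lambda = D$ using locality of the $D_\lambda$ (the paper picks a unit $a \in J \setminus \p_\lambda$ where you phrase it via $JD_\lambda = D_\lambda$, but the computation is the same), followed by the identical filter construction and the identical two maximality arguments via Proposition \ref{Prop:FiltersPreserveStar}. The only quibble is your parenthetical on $I = D$: that case is not ``nothing to prove'' but rather must be excluded (as the paper does implicitly via $J_v \subseteq I \subsetneq D$), since $D$ is contained in no filter limit of the $\p_\lambda$.
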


\begin{proof} Let $I$ be a $t$-ideal of $D$ and take $J \subseteq I$ to be a finitely-generated ideal. We claim that $J \subseteq \p_\lambda$ for some $\lambda$. If not, then for each $\lambda$, there exists some $a \in J$ such that $a \notin \p_\lambda$. We can more specifically say that $a \in D_\lambda \setminus \m_\lambda$, so $a$ is a unit of $D_\lambda$. Now let $b \in J^{-1}$. Then $bJ \subseteq D$. In particular, we have $ba \in D \subseteq D_\lambda$. This implies that $b = baa^{-1} \in D_\lambda$. Thus, $J^{-1} \subseteq D_\lambda$ for each $\lambda$, which means $J^{-1} = D$. This is a contradiction, as this implies $J_v = D$, but $J_v \subseteq I \subsetneq D$. We see now that $\chi_{I} \coloneqq \{\chi_d \mid d \in I\}$ has the finite intersection property. This is because for $d_1, \dots, d_n \in I$, we have $(d_1, \dots, d_n) \subseteq \p_\lambda$ for some $\lambda$ and thus $\p_\lambda \in \chi_{d_1} \cap \cdots \cap \chi_{d_n}$. Then $\chi_I$ can be extended to a filter $\mathcal{F}$ of $\{\p_\lambda\}$, and this means that $I \subseteq \lim\limits_{\mathcal{F}} \p_\lambda$. 
	
	If we furthermore assume that $I$ is a maximal ideal, then $I = \lim\limits_{\mathcal{F}} \p_\lambda$. If we assume that $I$ is $t$-maximal and every $\p_\lambda$ is a $t$-ideal, then $\lim\limits_{\mathcal{F}} \p_\lambda$ is a $t$-ideal, so by the $t$-maximality of $I$, we must have $I = \lim\limits_{\mathcal{F}} \p_\lambda$.
\end{proof}

We can always write a domain $D$ as $D = \bigcap\limits_{\m \in \Lambda} D_\m$, where $\Lambda$ is some subset of $\Max(D)$. If all of the maximal ideals of $D$ are $t$-ideals, the previous proposition applies and shows that $\Max(D)$ is contained in the ultrafilter closure of $\Lambda$. A ring with such a property is a DW-domain.

\begin{definition}
	A domain in which $d=w$ as star operations is called a \textbf{DW-domain}. Equivalent for a domain $D$, every maximal ideal of $D$ is a $t$-ideal if and only if $D$ is a DW-domain \cite[Proposition 2.2]{Mimouni2}.
\end{definition}

We can then use the form of the maximal ideals of a DW-domain to show the Skolem property. 
\begin{theorem}
	Let $D$ be a domain with field of fractions $K$ and $E$ a subset of $K$. Suppose that $\IntR(E,D)$ is a DW-domain. Then $\IntR(E,D)$ has the Skolem property.
	
\end{theorem}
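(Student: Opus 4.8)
The plan is to deduce the Skolem property from Theorem \ref{Thm:UltraSkolem} by verifying its condition (3) with the choice $\Lambda = \Max(D)$: namely, that every maximal ideal of $\IntR(E,D)$ is an ultrafilter limit of maximal pointed ideals $\M_{\m,a}$. The DW-domain hypothesis enters precisely here, since by the characterization recorded in the definition (from \cite[Proposition 2.2]{Mimouni2}) it guarantees that every maximal ideal of $\IntR(E,D)$ is a $t$-ideal, which is exactly the hypothesis needed to invoke Proposition \ref{Prop:LocalIntersection}.

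First I would record the decomposition
\[
	\IntR(E,D) = \bigcap_{\m \in \Max(D),\, a \in E} \IntR(E,D)_{\M_{\m,a}},
\]
exhibiting $\IntR(E,D)$ as an intersection of local overrings. The inclusion $\subseteq$ is immediate; for the reverse, if $\varphi \in K(x)$ lies in every localization $\IntR(E,D)_{\M_{\m,a}}$, then for each $a \in E$ and each $\m$ one may write $\varphi = \psi/s$ with $\psi, s \in \IntR(E,D)$ and $s(a) \notin \m$, whence $\varphi$ is defined at $a$ with $\varphi(a) = \psi(a)/s(a) \in D_\m$; intersecting over $\m$ gives $\varphi(a) \in D$ for every $a \in E$, that is, $\varphi \in \IntR(E,D)$. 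The center in $\IntR(E,D)$ of the maximal ideal of $\IntR(E,D)_{\M_{\m,a}}$ is $\M_{\m,a}$, so the indexing family of centers is exactly the set of maximal pointed ideals.

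Next I would fix an arbitrary maximal ideal $\mathfrak{N}$ of $\IntR(E,D)$. By the DW hypothesis it is a $t$-ideal, so Proposition \ref{Prop:LocalIntersection}(1) applies to the decomposition above and yields a filter $\mathcal{F}$ on $\{\M_{\m,a}\}$ with $\mathfrak{N} \subseteq \lim\limits_{\mathcal{F}} \M_{\m,a}$; since $\mathfrak{N}$ is maximal, part (2) upgrades this to $\mathfrak{N} = \lim\limits_{\mathcal{F}} \M_{\m,a}$. To pass from a filter to an ultrafilter, extend $\mathcal{F}$ to an ultrafilter $\mathcal{U}$ by the Ultrafilter Lemma; then $\mathfrak{N} = \lim\limits_{\mathcal{F}} \M_{\m,a} \subseteq \lim\limits_{\mathcal{U}} \M_{\m,a}$, the latter being a prime ideal that is proper because $\chi_1 = \emptyset \notin \mathcal{U}$. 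Maximality of $\mathfrak{N}$ then forces $\mathfrak{N} = \lim\limits_{\mathcal{U}} \M_{\m,a}$, so $\mathfrak{N}$ is an ultrafilter limit of maximal pointed ideals. As $\mathfrak{N}$ was arbitrary, $\Pi = \{\M_{\m,a} \mid \m \in \Max(D),\, a \in E\}$ is dense in $\Max(\IntR(E,D))$ in the ultrafilter topology, which is condition (3) of Theorem \ref{Thm:UltraSkolem}; the equivalence there delivers condition (4), the Skolem property.

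The computational content is light, so the step that needs the most care is the setup rather than any single calculation. Specifically, one must verify that the localizations at the maximal pointed ideals already intersect down to $\IntR(E,D)$, so that Proposition \ref{Prop:LocalIntersection} is applicable with the centers being precisely the $\M_{\m,a}$, and one must correctly use the DW-domain hypothesis to certify that an arbitrary maximal ideal is a $t$-ideal before invoking the proposition. The filter-to-ultrafilter passage is then routine, once one observes that $\lim\limits_{\mathcal{U}} \M_{\m,a}$ is a proper prime ideal containing the maximal ideal $\mathfrak{N}$.
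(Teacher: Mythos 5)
Your proof is correct and follows essentially the same route as the paper: write $\IntR(E,D)$ as an intersection of local overrings centered on the maximal pointed ideals, use the DW hypothesis to see every maximal ideal is a $t$-ideal, apply Proposition \ref{Prop:LocalIntersection} to realize each maximal ideal as an (ultra)filter limit of the $\M_{\m,a}$, and conclude via Theorem \ref{Thm:UltraSkolem}(3)$\Rightarrow$(4). The only difference is the choice of local overrings: the paper intersects the rings $D_{\m,a} = \{\varphi \in K(x) \mid \varphi(a) \in D_\m\}$, for which the decomposition of $\IntR(E,D)$ is immediate from the definitions, whereas you intersect the localizations $\IntR(E,D)_{\M_{\m,a}}$ and therefore need --- and correctly supply --- the short verification that these already intersect down to $\IntR(E,D)$.
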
 

\begin{proof}
	Denote by $D_{\m, a} = \{ \varphi \in K(x) \mid \varphi(a) \in D_{\m} \}$ for each $\m \in \Max(D)$ and $a \in E$. We have that \[\IntR(E,D) = \bigcap_{\m \in \Max(D), a \in E} D_{\m,a}.\] Notice that each $D_{\m,a}$ is a local domain centered on $\M_{\m,a}$. Because every maximal ideal of $\IntR(E,D)$ is a $t$-ideal, we use Proposition \ref{Prop:LocalIntersection} to conclude that all of the maximal ideals of $\IntR(E,D)$ are ultrafilter limits of $\{\M_{\m,a}\mid \m \in \Max(D), a \in E \}$. We satisfy Theorem \ref{Thm:UltraSkolem}(3) and thus $\IntR(E,D)$ has the Skolem property.
	
\end{proof}

If $D$ is a Prüfer domain, then $d = t$ since every nonzero finitely-generated ideal of $D$ is invertible. Therefore, $d = w$ and thus every Prüfer domain is a DW-domain. The previous result is therefore more general than the previously known result of $\IntR(E,D)$ being Prüfer implying the Skolem property \cite[Lemma 4.1]{IntValuedRational}. 

\section{The $(\star_1, \star_2)$-Skolem property}\label{Sect:StarStar}

\indent\indent In Theorem \ref{Thm:UltraSkolem}, the Skolem property is linked to a description of the maximal spectrum through ultrafilters. We notice that for a star operation $\star$ on a domain $D$, the $\star$-maximal ideals of $D$ are prime ideals of $D$. By Proposition \ref{Prop:FiltersPreserveStar}, if $\star$ is also a finite type star operation, then ultrafilter limits of a set of $\star$-prime ideals are also $\star$-prime, as long as the limit is not the zero ideal. This leads to a generalization of Theorem \ref{Thm:UltraSkolem} using star operations that gives rise to a more generalized notion of the Skolem property using star operations. 

\begin{definition}
	Let $D$ be a domain and $E$ a subset of the field of fractions of $D$. Also let $\star_1$ and $\star_2$ be star operations on $D$ and $\IntR(E,D)$, respectively. We say that $\IntR(E,D)$ has the \textbf{$(\star_1,\star_2)$-Skolem property} if $I \subseteq \IntR(E,D)$ is a finitely-generated ideal such that $I(a)^{\star_1} = D$ for each $a \in E$, then $I^{\star_2} = \IntR(E,D)$. Note that the $(d,d)$-Skolem property and the Skolem property are equivalent.
	
	The $(\star_1, \star_2)$-Skolem property is equivalent to the property that if $\varphi_1, \dots, \varphi_n$ are elements of $\IntR(E,D)$ such that $(\varphi_1, \dots, \varphi_n)^{\star_2}$ is a proper ideal of $\IntR(E,D)$, then there exists some $a \in E$ such that $(\varphi_1(a), \dots, \varphi_n(a))^{\star_1}$ is a proper ideal of $D$. 
\end{definition}

\begin{remark}
	Let $D$ be a domain and $E$ some subset of the field of fractions. If $I$ be a finitely-generated ideal of $\IntR(E,D)$ such that $I^{\star_2} = \IntR(E,D)$, it is not necessarily true that $I(a)^{\star_1} = D$ for all $a \in E$. 
	
	For example, if $D = \C[s]$, then $\IntR(D) = \C[x,s]$ and $(x,s) \subseteq \IntR(D)$ is a finitely-generated ideal such that $(x,s)_t = \IntR(D)$, yet $(x,s)(a)_t = (s)_t = (s)$ for $a \in s\C[s]$. 
\end{remark}

If we vary the star operations on $D$ and $\IntR(E,D)$, we can compare the star operation Skolem properties provided that the star operations are comparable in a certain way. 
\begin{proposition}
	Let $D$ be a domain with field of fractions $K$ and $E$ be a subset of $K$. Suppose that $\star_1, \star_1'$ are two star operations on $D$ such that $\star_1' \leq \star_1$, and $\star_2, \star_2'$ are two star operations on $\IntR(E,D)$ such that $\star_2 \leq \star_2'$. If $\IntR(E,D)$ has the $(\star_1, \star_2)$-Skolem property, then $\IntR(E,D)$ has the $(\star_1',\star_2')$-Skolem property. 
\end{proposition}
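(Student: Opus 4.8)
The plan is to unwind both Skolem properties to their definitions and observe that the hypotheses propagate through the two star-operation inequalities purely by monotonicity. Let $I = (\varphi_1, \dots, \varphi_n)$ be a finitely-generated ideal of $\IntR(E,D)$ satisfying the hypothesis of the $(\star_1', \star_2')$-Skolem property, namely $I(a)^{\star_1'} = D$ for every $a \in E$. The goal is to deduce $I^{\star_2'} = \IntR(E,D)$.

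First I would convert the $\star_1'$-hypothesis into the corresponding $\star_1$-hypothesis. Since $\star_1' \leq \star_1$, applying this inequality to the integral ideal $I(a)$ of $D$ gives $D = I(a)^{\star_1'} \subseteq I(a)^{\star_1}$. On the other hand, because $I(a) \subseteq D$, monotonicity of $\star_1$ together with axiom (1) yields $I(a)^{\star_1} \subseteq D^{\star_1} = D$. The two containments force $I(a)^{\star_1} = D$ for every $a \in E$. This is the one place where a little care is needed: passing from equality under a smaller star operation to equality under a larger one is only immediate because $I(a)$ is integral, so the ambient ring $D$ is itself a $\star_1$-ideal bounding $I(a)^{\star_1}$ from above.

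With $I(a)^{\star_1} = D$ for all $a \in E$ in hand, I would invoke the assumed $(\star_1, \star_2)$-Skolem property directly to conclude $I^{\star_2} = \IntR(E,D)$. Finally, I would push this equality up through the second inequality: since $\star_2 \leq \star_2'$, we have $\IntR(E,D) = I^{\star_2} \subseteq I^{\star_2'}$, and because $I$ is an integral ideal the reverse containment $I^{\star_2'} \subseteq \IntR(E,D)^{\star_2'} = \IntR(E,D)$ holds automatically, giving $I^{\star_2'} = \IntR(E,D)$, as required.

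The argument is entirely a monotonicity chain, so I do not anticipate a genuine obstacle. The only subtlety worth flagging is the repeated use of the elementary fact that an integral ideal $J$ satisfies $J^{\star} \subseteq D$ for any star operation $\star$ (since $J \subseteq D$ and $D^{\star} = D$); this is exactly what lets the containment "$\supseteq D$" be upgraded to "$= D$" at both ends of the chain, once on $D$ via $\star_1'\le\star_1$ and once on $\IntR(E,D)$ via $\star_2\le\star_2'$.
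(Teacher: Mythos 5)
Your proof is correct and is essentially the same monotonicity argument as the paper's, which simply runs the chain in contrapositive form (starting from $I^{\star_2'}$ proper and descending to $I(a)^{\star_1'}$ proper) rather than in the direct form you use. The extra care you flag about upgrading $\supseteq D$ to $= D$ for integral ideals is fine and implicit in the paper's version.
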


\begin{proof} Let $I$ be a nonzero finitely-generated ideal of $\IntR(E,D)$ such that $I^{\star_2'}$ is proper. Since $I^{\star_2} \subseteq I^{\star_2'} \subsetneq \IntR(E,D)$ and $\IntR(E,D)$ has the $(\star_1,\star_2)$-Skolem property, there exists $a \in E$ such that $I(a)^{\star_1}$ is a proper ideal of $D$. By assumption, $I(a)^{\star_1'} \subseteq I(a)^{\star_1}$, so we have that $I(a)^{\star_1'}$ is proper. Thus, $\IntR(E,D)$ has the $(\star_1',\star_2')$-Skolem property.
\end{proof}

We now give the analogue of Theorem \ref{Thm:UltraSkolem} in terms of the star operations. 
\begin{theorem}\label{Thm:StarUltraSkolem}
	Let $D$ be a domain with field of fractions $K$ and $E$ be a nonempty subset of $K$. Take $\star_1$ to be a finite type star operation on $D$ and $\star_2$ to be a finite type star operation on $\IntR(E,D)$. Let $\Lambda \subseteq \Spec(D)$ such that each $\p \in \Lambda$ has the property that $\p^{\star_1} \subsetneq D$ and $\star_1-\Max(D)$ is contained in the ultrafilter closure of $\Lambda$. Define the family $\Pi = \{\P_{\p,a} \mid \p \in \Lambda, a \in E \}$. We assume every ideal in $\Pi$ is a $\star_2$-prime ideal. 
	
	For an element $\varphi \in \IntR(E,D)$, we define $\chi_\varphi = \{\P_{\p,a} \in \Pi \mid \varphi \in \P_{\p,a} \}$, and for an ideal $I \subseteq \IntR(E,D)$, we define $\chi_I = \{\chi_\varphi \mid \varphi \in I \}$. Then the following are equivalent:
	\begin{enumerate}
		\item  $\chi_I$ has the finite intersection property for all ideals $I \subseteq \IntR(E,D)$ such that $I^{\star_2} \subsetneq \IntR(E,D)$.
		\item  $\chi_I$ has the finite intersection property for every finitely-generated ideals $I$ of $\IntR(E,D)$ such that $I^{\star_2} \subsetneq \IntR(E,D)$.
		\item  $\star_2-\Max(\IntR(E,D))$ is contained in the ultrafilter closure of $\Pi$.
		\item  $\IntR(E,D)$ has the $(\star_1, \star_2)$-Skolem property.
	\end{enumerate}
\end{theorem}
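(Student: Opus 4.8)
The plan is to prove the four statements equivalent by establishing the cycle $(1)\Rightarrow(2)\Rightarrow(4)\Rightarrow(1)$ together with the equivalence $(1)\Leftrightarrow(3)$. The recurring device is the Ultrafilter Lemma, which lets me translate freely between ``$\chi_I$ has the finite intersection property'' and ``$\chi_I$ extends to an ultrafilter $\mathcal{U}$ on $\Pi$ with $I \subseteq \lim_{\mathcal{U}} \P_{\p,a}$.'' The bridge to value ideals is the elementary observation that for $\varphi \in \IntR(E,D)$ and $\P_{\p,a} \in \Pi$, membership $\varphi \in \P_{\p,a}$ means exactly $\varphi(a) \in \p$; hence a finite family $\varphi_1, \dots, \varphi_n$ lies in a common member of $\Pi$ if and only if some $\P_{\p,a}$ satisfies $\varphi_i(a) \in \p$ for all $i$, i.e. $(\varphi_1(a), \dots, \varphi_n(a)) \subseteq \p$.

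For the first three, $(1)\Rightarrow(2)$ is immediate since finitely-generated ideals form a subclass. For $(2)\Rightarrow(4)$, I would take finitely-generated $I = (\varphi_1, \dots, \varphi_n)$ with $I^{\star_2} \subsetneq \IntR(E,D)$; the finite intersection property gives a single $\P_{\p,a} \in \Pi$ containing every $\varphi_i$, so $(\varphi_1(a), \dots, \varphi_n(a))^{\star_1} \subseteq \p^{\star_1} \subsetneq D$, which is precisely the $(\star_1,\star_2)$-Skolem property in its value-ideal formulation. For $(4)\Rightarrow(1)$, let $I$ be any ideal with $I^{\star_2}$ proper and $\varphi_1, \dots, \varphi_n \in I$; then $(\varphi_1, \dots, \varphi_n)^{\star_2} \subseteq I^{\star_2}$ is proper, so the Skolem property supplies $a \in E$ with $(\varphi_1(a), \dots, \varphi_n(a))^{\star_1} \subsetneq D$. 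Since $\star_1$ is of finite type, this proper $\star_1$-ideal lies inside some $\m \in \star_1-\Max(D)$, and by hypothesis $\m = \lim_{\mathcal{U}} \p$ for an ultrafilter $\mathcal{U}$ on $\Lambda$. As each $\varphi_i(a) \in \m$, the sets $\{\p \in \Lambda \mid \varphi_i(a) \in \p\}$ all lie in $\mathcal{U}$, so their finite intersection is nonempty, producing a single $\p \in \Lambda$ with all $\varphi_i(a) \in \p$; then $\P_{\p,a}$ witnesses $\chi_{\varphi_1} \cap \cdots \cap \chi_{\varphi_n} \neq \emptyset$.

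For $(1)\Leftrightarrow(3)$ I would argue through ultrafilter limits in both directions. For $(3)\Rightarrow(1)$, given $I$ with $I^{\star_2}$ proper, the finite type of $\star_2$ places $I^{\star_2}$ inside some $\star_2$-maximal ideal $Q$, which by $(3)$ equals $\lim_{\mathcal{U}} \P_{\p,a}$ for an ultrafilter $\mathcal{U}$ on $\Pi$; every element of $I$ then lies in $Q$, so its characteristic set lies in $\mathcal{U}$, and finitely many such sets intersect because $\mathcal{U}$ is a filter, yielding the finite intersection property. For $(1)\Rightarrow(3)$, let $Q$ be $\star_2$-maximal, so $Q = Q^{\star_2} \subsetneq \IntR(E,D)$; by $(1)$, $\chi_Q$ has the finite intersection property and extends to an ultrafilter $\mathcal{U}$ on $\Pi$, whence $Q \subseteq \lim_{\mathcal{U}} \P_{\p,a}$. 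The crucial point is to upgrade this inclusion to equality: since each $\P_{\p,a} \in \Pi$ is a $\star_2$-prime and $\star_2$ is of finite type, Proposition \ref{Prop:FiltersPreserveStar} shows $\lim_{\mathcal{U}} \P_{\p,a}$ is a $\star_2$-ideal (it is nonzero, as it contains the nonzero ideal $Q$), and it is proper because $\chi_1 = \emptyset \notin \mathcal{U}$; the $\star_2$-maximality of $Q$ then forces $Q = \lim_{\mathcal{U}} \P_{\p,a}$, placing $Q$ in the ultrafilter closure of $\Pi$.

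The step I expect to be the main obstacle is $(1)\Rightarrow(3)$, precisely the promotion of $Q \subseteq \lim_{\mathcal{U}} \P_{\p,a}$ to equality. This is where the hypotheses that $\star_2$ is of finite type and that every member of $\Pi$ is a $\star_2$-prime are indispensable: without them Proposition \ref{Prop:FiltersPreserveStar} does not apply, the ultrafilter limit need not be a $\star_2$-ideal, and the $\star_2$-maximality of $Q$ cannot be exploited. The analogous roles of the finite-type hypothesis on $\star_1$ (to locate a containing $\star_1$-maximal ideal) and of the density of $\Lambda$ in $\star_1-\Max(D)$ are exactly what make $(4)\Rightarrow(1)$ succeed, so I would be careful to invoke each hypothesis at the matching step.
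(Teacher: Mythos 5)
Your proposal is correct and follows essentially the same route as the paper: the same translation between membership in $\P_{\p,a}$ and value-ideal containment, the same use of Proposition \ref{Prop:FiltersPreserveStar} to upgrade $Q \subseteq \lim_{\mathcal{U}} \P_{\p,a}$ to equality in $(1)\Rightarrow(3)$, and the same use of the finite-type hypotheses and the density of $\Lambda$. The only cosmetic difference is that you fuse the paper's steps $(4)\Rightarrow(2)$ and $(2)\Rightarrow(1)$ into a single implication $(4)\Rightarrow(1)$, and you spell out a few details (properness of the ultrafilter limit, the role of finite type in locating a $\star_2$-maximal ideal in $(3)\Rightarrow(1)$) that the paper leaves implicit.
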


\begin{proof}
	Again, we first establish the equivalence of 1, 2, and 3. Then, we show that 2 and 4 are equivalent.

	\begin{itemize}
		\setlength{\itemindent}{3em}
		\item[$1 \implies 2:$] Finitely-generated ideals are ideals.
		
		\item[$2 \implies 1:$] Let $I \subseteq \IntR(E,D)$ such that $I^{\star_2}$ is proper. Take $\varphi_1, \dots, \varphi_n \in I$. Then $(\varphi_1, \dots, \varphi_n)^{\star_2} \subseteq I^{\star_2}$ is proper, so $\chi_{(\varphi_1, \dots, \varphi_n)}$ has the finite intersection property. In particular, $\chi_{\varphi_1} \cap \cdots \cap \chi_{\varphi_n}$ is nonempty. 
		
		\item[$1 \implies 3:$] Let $\P$ be a $\star_2$-maximal ideal of $\IntR(E,D)$. Then $\chi_\P$ has the finite intersection property so there exists some ultrafilter $\mathcal{U}$ of $\Pi$ such that $\chi_\P \subseteq \mathcal{U}$. This implies that $\P \subseteq \lim\limits_{\mathcal{U}} \P_{\p, a}$, the ultrafilter limit of $\Pi$ with respect to $\mathcal{U}$. Since each $\P_{\p,a} \in \Pi$ is a $\star_2$-ideal, by Proposition \ref{Prop:FiltersPreserveStar}, we have that $\lim\limits_{\mathcal{U}} \P_{\p, a}$ is a $\star_2$-ideal. Together with the fact that $\P$ is $\star_2$-maximal, we conclude that $\P = \lim\limits_{\mathcal{U}} \P_{\p, a}$. 
		
		\item[$3 \implies 1:$] Let $I \subseteq \IntR(E,D)$ such that $I^{\star_2}$ is a proper ideal of $\IntR(E,D)$. This means that $I \subseteq \lim\limits_{\mathcal{U}} \P_{\p, a}$, the ultrafilter limit of $\Pi$ with respect to $\mathcal{U}$, for some ultrafilter $\mathcal{U}$ of $\Pi$. This shows that $\chi_I \subseteq \mathcal{U}$ and therefore $\chi_I$ has the finite intersection property. 
		
		\item[$2 \implies 4:$] Let $(\varphi_1, \dots, \varphi_n) \subseteq \IntR(E,D)$ such that $(\varphi_1, \dots, \varphi_n)^{\star_2}$ is a proper ideal. Then $\chi_{\varphi_1} \cap \cdots \cap \chi_{\varphi_n}$ is not empty so it contains some $\P_{\p,a} \in \Pi$. This implies that $\varphi_1(a), \dots, \varphi_n(a) \in \p$. Thus, $(\varphi_1(a), \dots, \varphi_n(a))^{\star_1} \subseteq \p^{\star_1}$, a proper ideal of $D$. 
		
		\item[$4 \implies 2:$] Let $(\varphi_1, \dots, \varphi_n) \subseteq \IntR(E,D)$ such that $(\varphi_1, \dots, \varphi_n)^{\star_2}$ is a proper ideal of $\IntR(E,D)$. Due to the fact that $\IntR(E,D)$ has the $(\star_1, \star_2)$-Skolem property, there exists $a \in E$ such that $(\varphi_1(a), \dots, \varphi_n(a))^{\star_1}$ is proper. This means that $(\varphi_1(a), \dots, \varphi_n(a)) \subseteq \q$ for some $\q \in \star_1-\Max(D)$. Since the ultrafilter closure of $\Lambda$ contains all $\star_1$-maximal ideals and $(\varphi_1(a), \dots, \varphi_n(a))$ is finitely-generated, there exists $\p \in \Lambda$ so that $(\varphi_1(a), \dots, \varphi_n(a)) \subseteq \p$. Thus, $\P_{\p, a} \in \chi_{\varphi_1} \cap \cdots \cap \chi_{\varphi_n}$. We conclude that $\chi_I$ has the finite intersection property. 
	\end{itemize}

\end{proof}

The extra assumption that every ideal in $\Pi$ is a $\star_2$-prime ideal is achieved if $\star_1$ and $\star_2$ are integrally compatible.

\begin{definition}
	Let $D$ be a domain and take $E$ to be a subset of the field of fractions of $D$. Let $\star_1$ be a star operation on $D$ and $\star_2$ be a star operation on $\IntR(E,D)$. We say that $\star_1$ and $\star_2$ are \textbf{integrally compatible} if for all ideals $I$ of $\IntR(E,D)$, we have $I^{\star_2}(a)^{\star_1} = I(a)^{\star_1}$ for all $a \in E$.
\end{definition}

Compare this to the definition of compatibility of star operations on rings in a ring extension given in \cite{Anderson}. For domains $A$ and $B$ with $A \subseteq B$ and star operations $\star_1$ on $A$ and $\star_2$ on $B$, we say that $\star_1$ and $\star_2$ are compatible if for any fractional ideal $I$ of $A$, we have $(IB)^{\star_2} = (I^\star_1B)^{\star_2}$. The comparison of ideals for compatibility happens in the larger ring, whereas the comparison of ideals for integral compatibility happens in the smaller ring.  

\begin{proposition}
	Let $D$ be a domain and take $E$ to be a subset of the field of fractions of $D$. Also let $\star_1$ be a star operation on $D$ and $\star_2$ be a star operation on $\IntR(E,D)$. Suppose that $\star_1$ and $\star_2$ are integrally compatible. If $\p$ is a $\star_1$-prime of $D$, then $\P_{\p,a}$ is a $\star_2$-prime of $\IntR(E,D)$ for any $a \in E$. 
\end{proposition}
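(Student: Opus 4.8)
The plan is to show that $\P_{\p,a}$ is simultaneously prime and a $\star_2$-ideal, which is exactly what it means to be a $\star_2$-prime. Primality requires no new work: since $\p$ is a $\star_1$-prime it is in particular a prime ideal of $D$, and the earlier observation that $\IntR(E,D)/\P_{\p,a} \cong D/\p$ shows $\P_{\p,a}$ is prime. So the whole content is to verify $\P_{\p,a}^{\star_2} = \P_{\p,a}$. The first step I would carry out is to compute the value ideal $\P_{\p,a}(a)$. Since $D \subseteq \IntR(E,D)$ as constant functions, evaluation at $a$ is surjective onto $D$, and so the image of $\P_{\p,a}$ under this map is exactly $\p$; that is, $\P_{\p,a}(a) = \p$. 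In particular this is a nonzero ideal (as $\p$ is a nonzero prime, being a $\star_1$-ideal), so star operations may be applied to it.

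Next I would feed $I = \P_{\p,a}$ into the integral compatibility hypothesis $I^{\star_2}(a)^{\star_1} = I(a)^{\star_1}$. The right-hand side is $\P_{\p,a}(a)^{\star_1} = \p^{\star_1} = \p$, where the last equality uses that $\p$ is a $\star_1$-ideal. Hence $\P_{\p,a}^{\star_2}(a)^{\star_1} = \p$. Now, by the extension axiom of the star operation $\star_1$, the ideal $\P_{\p,a}^{\star_2}(a)$ is contained in its own $\star_1$-closure, which we have just shown equals $\p$; therefore $\P_{\p,a}^{\star_2}(a) \subseteq \p$.

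Finally I would translate this containment of value ideals back into a containment of ideals of $\IntR(E,D)$. Any $\psi \in \P_{\p,a}^{\star_2}$ satisfies $\psi(a) \in \P_{\p,a}^{\star_2}(a) \subseteq \p$, and $\psi(a) \in \p$ is precisely the condition for $\psi$ to lie in $\P_{\p,a}$. Thus $\P_{\p,a}^{\star_2} \subseteq \P_{\p,a}$, and combined with the general inclusion $\P_{\p,a} \subseteq \P_{\p,a}^{\star_2}$ coming from the star operation axioms, we obtain $\P_{\p,a}^{\star_2} = \P_{\p,a}$, completing the proof. The only real subtlety, and the step I would state most carefully, is the passage through the value ideals: we never need the full equation $I^{\star_2}(a)^{\star_1} = I(a)^{\star_1}$ as an equality, only the inclusion it yields once the extension axiom for $\star_1$ is invoked, and one must confirm that $\P_{\p,a}(a) = \p$ is genuinely nonzero so that $\star_1$ is defined on it.
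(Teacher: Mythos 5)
Your proposal is correct and follows essentially the same route as the paper: feed $I = \P_{\p,a}$ into the integral compatibility identity to get $\P_{\p,a}^{\star_2}(a)^{\star_1} = \p^{\star_1} = \p$, deduce $\P_{\p,a}^{\star_2}(a) \subseteq \p$ and hence $\P_{\p,a}^{\star_2} \subseteq \P_{\p,a}$. The extra details you supply (that $\P_{\p,a}(a) = \p$ via surjectivity of evaluation, the nonzero-ness check, and the final appeal to $I \subseteq I^{\star_2}$) are all correct and merely make explicit what the paper leaves implicit.
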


\begin{proof}
	Let $\p$ be a $\star_1$-prime of $D$ and let $a \in E$. We want to show that $\P_{\p, a}^{\star_2} \subseteq \P_{\p, a}$. To this end, we calculate that
	\[
	\P_{\p,a}^{\star_2}(a)^{\star_1} = \P_{\p,a}(a)^{\star_1} = \p^{\star_1} = \p.
	\]
	Thus, $\P_{\p,a}^{\star_2}(a) \subseteq \p$, which means that $\P_{\p, a}^{\star_2} \subseteq \P_{\p, a}$. This implies that $\P_{\p, a}$ is a $\star_2$-prime of $\IntR(E,D)$.
\end{proof}

We want to extend Proposition \ref{Prop:tUltra} using star operations in order to use Theorem \ref{Thm:StarUltraSkolem} to show the $(\star_1,\star_2)$-Skolem property. The following results discuss star operations that are $w$-operations. It is another generalization of Proposition \ref{Prop:tUltra}. Instead of generalizing the family of ring of which $D$ can be written as the intersection, we generalize the star operation. 

\begin{proposition}\label{Prop:StarUltrafilter}
	Let $D = \bigcap\limits_\lambda V_\lambda$ be an intersection of a family of valuation domains. Let $\star$ denote the star operation on $D$ given by $I^\star = \bigcap\limits_\lambda IV_\lambda$ for every nonzero fractional ideal of $D$, and denote by $\p_\lambda$ the center of $V_\lambda$ in $D$. Then the following statements hold.
	\begin{enumerate}
		\item If $I\subsetneq D$ is a $\star_f$-ideal of $D$, then $I \subseteq \lim\limits_{\mathcal{F}} \p_\lambda$ for some filter $\mathcal{F}$ of $\{\p_\lambda\}$.
		\item If moreover $I$ is maximal, or if $I$ is $\star_f$-maximal and every $\p_\lambda$ is a $\star_f$-ideal, then $I = \lim\limits_{\mathcal{F}} \p_\lambda$. 
	\end{enumerate}
\end{proposition}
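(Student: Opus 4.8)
The plan is to follow the template of Proposition~\ref{Prop:LocalIntersection} (itself modeled on Proposition~\ref{Prop:tUltra}), exploiting the fact that here the star operation $\star$ is given explicitly by $I^\star = \bigcap_\lambda IV_\lambda$. This explicit description makes the crucial containment step cleaner than in the general local-overring setting, because membership of an element in a center $\p_\lambda$ translates directly into being a unit of $V_\lambda$.

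The heart of the argument is the lemma that every nonzero finitely-generated ideal $J \subseteq I$ satisfies $J \subseteq \p_\lambda$ for some $\lambda$. The reduction I would use is that on finitely-generated ideals $\star$ and $\star_f$ agree: since $J \in \Lambda_J$ we have $J^\star \subseteq J^{\star_f}$, while $\star_f \leq \star$ gives the reverse, so $J^{\star_f} = J^\star$. Because $I$ is a $\star_f$-ideal and $J \subseteq I$, this yields $J^\star = J^{\star_f} \subseteq I^{\star_f} = I \subsetneq D$, hence $\bigcap_\lambda JV_\lambda \subsetneq D$. Now if $J \not\subseteq \p_\lambda$ for every $\lambda$, then for each $\lambda$ some element $a \in J$ lies outside $\p_\lambda = \m_\lambda \cap D$; since $a \in D \subseteq V_\lambda$, this means $a$ is a unit of $V_\lambda$, forcing $JV_\lambda = V_\lambda$ for all $\lambda$ and thus $J^\star = \bigcap_\lambda V_\lambda = D$, a contradiction. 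The main subtlety of the whole proof lives in this step, namely the interplay between $\star$ and $\star_f$; everything downstream is formal.

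From the lemma the finite intersection property of $\chi_I$ follows immediately: given $a_1,\dots,a_n \in I$, the ideal $(a_1,\dots,a_n)$ is finitely-generated and contained in $I$, so it lies in some $\p_\lambda$, placing $\p_\lambda$ in $\chi_{a_1}\cap\cdots\cap\chi_{a_n}$. I would then extend $\chi_I$ to a filter $\mathcal{F}$ of $\{\p_\lambda\}$, which is possible precisely because of the finite intersection property, and conclude $I \subseteq \lim\limits_{\mathcal{F}} \p_\lambda$, proving part (1).

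For part (2) I would first note that $\lim\limits_{\mathcal{F}}\p_\lambda$ is proper: no $\p_\lambda$ contains $1$, so $\chi_1 = \emptyset \notin \mathcal{F}$ and hence $1 \notin \lim\limits_{\mathcal{F}}\p_\lambda$. If $I$ is maximal, then $I \subseteq \lim\limits_{\mathcal{F}}\p_\lambda \subsetneq D$ forces equality. If instead $I$ is $\star_f$-maximal and every $\p_\lambda$ is a $\star_f$-ideal, then since $I \subseteq \lim\limits_{\mathcal{F}}\p_\lambda$ is a nonzero ideal, Proposition~\ref{Prop:FiltersPreserveStar} applies (as $\star_f$ is of finite type) and shows $\lim\limits_{\mathcal{F}}\p_\lambda$ is itself a proper $\star_f$-ideal; the $\star_f$-maximality of $I$ then again forces $I = \lim\limits_{\mathcal{F}}\p_\lambda$.
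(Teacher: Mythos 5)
Your proposal is correct and follows essentially the same route as the paper's proof: the key step in both is that a finitely-generated $J \subseteq I$ not contained in any $\p_\lambda$ would satisfy $JV_\lambda = V_\lambda$ for all $\lambda$ and hence $J^\star = D \subseteq I^{\star_f} = I$, a contradiction, after which the finite intersection property, the filter extension, and the appeal to Proposition~\ref{Prop:FiltersPreserveStar} proceed identically. The only cosmetic differences are that you phrase the unit argument directly (an element of $D \setminus \p_\lambda$ is a unit of the local ring $V_\lambda$) where the paper argues by contrapositive, and you insert the (true but unneeded) observation that $\star$ and $\star_f$ agree on finitely-generated ideals.
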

\begin{proof}
	\,
	\begin{enumerate}
		\item Let $J \subseteq I$ be a finitely-generated ideal. If $J \not\subseteq \p_\lambda$, then $JV_\lambda = V_\lambda$. This is because $JV_\lambda \subseteq \m_\lambda$, the maximal ideal of $V_\lambda$, implies $J \subseteq JV_\lambda \cap D \subseteq \p_\lambda$. Thus, if $J \not\subseteq \p_\lambda$ for all $\lambda$, then $J^\star = D$. This is a contradiction since $D = J^\star \subseteq I^{\star_f}$, making $I$ not a $\star_f$-ideal.  
		
		This implies for all finitely-generated ideals $J \subseteq I$, there exists some $\lambda$ such that $J \subseteq \p_\lambda$. In particular, take $a_1, \dots, a_n \in I$. Then there exists a $\lambda$ such that $(a_1, \dots, a_n) \subseteq \p_\lambda$. This means that $\chi_{a_1} \cap \cdots \cap \chi_{a_n}$ is nonempty, so the collection $\chi_I \coloneqq \{\chi_d \mid d \in I\}$ can be extended to some filter $\mathcal{F}$ of $\{\p_\lambda\}$. This implies that $I \subseteq \lim\limits_{\mathcal{F}} \p_\lambda$.

		\item  If moreover $I$ is a maximal ideal, then $I \subseteq \lim\limits_{\mathcal{F}} \p_\lambda$ implies $I = \lim\limits_{\mathcal{F}} \p_\lambda$. 
		
		If each $\p_\lambda$ is a $\star_f$-ideal, then $\lim\limits_{\mathcal{F}} \p_\lambda$ is a $\star_f$-ideal by Proposition \ref{Prop:FiltersPreserveStar} since $\lim\limits_{\mathcal{F}} \p_\lambda$ is nonzero. Therefore, if $I$ is $\star_f$-maximal, then $I = \lim\limits_{\mathcal{F}} \p_\lambda$. 
	\end{enumerate}
\end{proof} 

\begin{corollary}\label{Cor:StarUltrafilter}
	Let $D = \bigcap\limits_\lambda V_\lambda$ be an intersection of a family of essential valuation domains, that is, each $V_\lambda$ is essential for $D$. Let $\star$ denote the star operation on $D$ given by $I^\star = \bigcap\limits_\lambda IV_\lambda$ for every nonzero fractional ideal of $D$, and denote by $\p_\lambda$ the center of $V_\lambda$ in $D$. If $I$ is a $\star_f$-maximal ideal of $D$, then $I = \lim\limits_{\mathcal{U}} \p_\lambda$ for some ultrafilter $\mathcal{U}$ of $\{\p_\lambda\}$. 
	
\end{corollary}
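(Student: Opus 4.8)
The plan is to reduce to Proposition \ref{Prop:StarUltrafilter}, whose second part already produces a \emph{filter} $\mathcal{F}$ with $I = \lim_{\mathcal{F}} \p_\lambda$ once one knows every $\p_\lambda$ is a $\star_f$-ideal. So the corollary amounts to two things beyond the proposition: first, extracting the hypothesis ``every $\p_\lambda$ is a $\star_f$-ideal'' from the essentiality assumption, and second, upgrading the filter $\mathcal{F}$ to an ultrafilter $\mathcal{U}$ while preserving the equality with $I$.

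First I would show that each $\p_\lambda$ is a $\star$-ideal, and therefore a $\star_f$-ideal. Since $V_\lambda = D_{\p_\lambda}$ is essential, its maximal ideal is $\m_\lambda = \p_\lambda D_{\p_\lambda} = \p_\lambda V_\lambda$, and the center satisfies $\m_\lambda \cap D = \p_\lambda$. Now $\p_\lambda^\star = \bigcap_\mu \p_\lambda V_\mu$ is contained in $\bigcap_\mu V_\mu = D$ and in the single factor $\p_\lambda V_\lambda = \m_\lambda$, hence $\p_\lambda^\star \subseteq \m_\lambda \cap D = \p_\lambda$. Together with $\p_\lambda \subseteq \p_\lambda^\star$ this gives $\p_\lambda^\star = \p_\lambda$. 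Because $\star_f \leq \star$, we then have $\p_\lambda \subseteq \p_\lambda^{\star_f} \subseteq \p_\lambda^\star = \p_\lambda$, so $\p_\lambda$ is a $\star_f$-ideal.

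Next, since $I$ is $\star_f$-maximal it is in particular a proper $\star_f$-ideal, so Proposition \ref{Prop:StarUltrafilter}(1) yields a filter $\mathcal{F}$ of $\{\p_\lambda\}$ with $I \subseteq \lim_{\mathcal{F}} \p_\lambda$. By the Ultrafilter Lemma I would extend $\mathcal{F}$ to an ultrafilter $\mathcal{U}$; since $\chi_r \in \mathcal{F}$ implies $\chi_r \in \mathcal{U}$, this gives $I \subseteq \lim_{\mathcal{F}} \p_\lambda \subseteq \lim_{\mathcal{U}} \p_\lambda$. The ultrafilter limit $\lim_{\mathcal{U}} \p_\lambda$ is a prime ideal, hence proper, and being nonzero (it contains the nonzero ideal $I$) it is a $\star_f$-ideal by Proposition \ref{Prop:FiltersPreserveStar} together with the fact just established that each $\p_\lambda$ is a $\star_f$-ideal. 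Thus $\lim_{\mathcal{U}} \p_\lambda$ is a proper $\star_f$-ideal containing $I$, and $\star_f$-maximality of $I$ forces $I = \lim_{\mathcal{U}} \p_\lambda$.

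I expect the short computation that each $\p_\lambda$ is a $\star_f$-ideal to be the crux: it is precisely where essentiality enters and is the content added relative to Proposition \ref{Prop:StarUltrafilter}. The filter-to-ultrafilter passage is then routine, resting only on Proposition \ref{Prop:FiltersPreserveStar} and on the stability of primeness (hence properness) under ultrafilter limits, after which $\star_f$-maximality pins down the equality.
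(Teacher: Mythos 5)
Your proof is correct, and it follows the same skeleton as the paper's (establish that every $\p_\lambda$ is a $\star_f$-ideal, then invoke Proposition \ref{Prop:StarUltrafilter}), but it handles the crux step by a genuinely different route. The paper observes that the maximal ideal of a valuation domain is a $t$-ideal, cites a lemma of Kang to conclude that the center $\p_\lambda$ of the essential valuation overring $V_\lambda = D_{\p_\lambda}$ is again a $t$-ideal, and then uses $\star_f \leq t$. You instead compute directly from the defining formula of $\star$: since $\p_\lambda^\star = \bigcap_\mu \p_\lambda V_\mu \subseteq D \cap \p_\lambda V_\lambda = D \cap \p_\lambda D_{\p_\lambda} = \p_\lambda$, the center is already a $\star$-ideal, and $\star_f \leq \star$ finishes it. Your computation is more self-contained (no external citation, no detour through the $t$-operation) and it isolates exactly where essentiality is used, namely in the identification $\p_\lambda V_\lambda \cap D = \p_\lambda$; the paper's route buys the slightly stronger fact that $\p_\lambda$ is a $t$-ideal, which is not needed here. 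You are also more careful than the paper's one-line conclusion about the passage from the filter of Proposition \ref{Prop:StarUltrafilter} to the ultrafilter demanded by the statement: extending $\mathcal{F}$ to an ultrafilter $\mathcal{U}$, noting that $\lim_{\mathcal{U}} \p_\lambda$ is a proper nonzero $\star_f$-ideal containing $I$ by Proposition \ref{Prop:FiltersPreserveStar}, and invoking $\star_f$-maximality is exactly the argument that needs to be made, and you make it correctly.
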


\begin{proof}
	The maximal ideal of a valuation domain is always a $t$-ideal, so by \cite[Lemma 3.17(1)]{Kang}, we know that each $\p_\lambda$ is also a $t$-ideal. Because $\star_f \leq t$, we have that each $\p_\lambda$ is a $\star_f$-ideal. Thus, if $I$ is a $\star_f$-maximal ideal of $D$, then $I = \lim\limits_{\mathcal{U}} \p_\lambda$ for some ultrafilter $\mathcal{U}$ of $\{\p_\lambda\}$ by Proposition \ref{Prop:StarUltrafilter}. 
\end{proof}

We then use this result to show that for certain star operations $\star_1$ on $D$ and $\star_2$ on $\IntR(E,D)$ built using valuation overrings, we have the $(\star_1, \star_2)$-Skolem property for $\IntR(E,D)$.

\begin{theorem}
	Let $D$ be a domain with field of fractions $K$. Take $E$ to be some subset of $K$. Let $\star_1$ be a finite type star operation on $D$ and $\Lambda \subseteq \Spec(D)$ such that
	\begin{itemize}
		\item $\p \in \Lambda$ implies that $\p^{\star_1}$ is a proper ideal,
		\item  $D = \bigcap\limits_{\p \in \Lambda} D_\p$, and
		\item $\star_1-\Max(D)$ is contained in the ultrafilter closure of $\Lambda$.
	\end{itemize}
	Let $\star_2$ be a star operation on $\IntR(E,D)$ defined by $I^{\star_2} = \bigcap\limits_{\mu} IV_\mu$ for every nonzero fractional ideal $I$ of $D$, where $\{V_\mu\}$ is a family of valuation overrings such that
	\begin{itemize}
		\item $\IntR(E,D) = \bigcap\limits_\mu V_\mu$,
		\item each $V_\mu$ is centered on $\P_{\p, a}$ for some $\p \in \Lambda$ and $a \in E$, and
		\item for each $\p \in \Lambda$ and $a \in E$, the ideal $\P_{\p, a}$ is a $(\star_2)_f$ ideal. 
	\end{itemize}
	Then $\IntR(E,D)$ has the $(\star_1, (\star_2)_f)$-Skolem property. 
\end{theorem}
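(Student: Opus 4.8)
The plan is to deduce the result directly from Theorem~\ref{Thm:StarUltraSkolem}, used with the finite type star operation $(\star_2)_f$ playing the role of the second star operation there. Most of the hypotheses of that theorem are already supplied: $\star_1$ is a finite type star operation, each $\p \in \Lambda$ satisfies $\p^{\star_1} \subsetneq D$, and $\star_1-\Max(D)$ is contained in the ultrafilter closure of $\Lambda$; moreover $(\star_2)_f$ is of finite type by construction, since applying the finite type construction is idempotent. The one input that needs checking is that every ideal of $\Pi = \{\P_{\p,a} \mid \p \in \Lambda,\, a \in E\}$ is a $(\star_2)_f$-prime. For $\p \in \Lambda$ and $a \in E$, the ideal $\P_{\p,a}$ is prime because $\IntR(E,D)/\P_{\p,a} \cong D/\p$, and it is a $(\star_2)_f$-ideal by the third bullet of the hypothesis; hence it is a $(\star_2)_f$-prime. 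With these verified, it remains to establish any one of the equivalent conditions of Theorem~\ref{Thm:StarUltraSkolem}, and I would aim for condition (3).

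To obtain condition (3), that $(\star_2)_f-\Max(\IntR(E,D))$ lies in the ultrafilter closure of $\Pi$, I would apply Proposition~\ref{Prop:StarUltrafilter} to the presentation $\IntR(E,D) = \bigcap_\mu V_\mu$ with the star operation $\star_2$, whose finite type companion is exactly $(\star_2)_f$. Let $\P$ be a $(\star_2)_f$-maximal ideal and let $\q_\mu$ denote the center of $V_\mu$ in $\IntR(E,D)$. By part~(1) of that proposition, $\P \subseteq \lim_{\mathcal{F}} \q_\mu$ for some filter $\mathcal{F}$ of $\{\q_\mu\}$. Each $\q_\mu$ equals some $\P_{\p,a}$ with $\p \in \Lambda$ and $a \in E$, and is therefore a $(\star_2)_f$-ideal by hypothesis. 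Extending $\mathcal{F}$ to an ultrafilter $\mathcal{U}$, the limit $\lim_{\mathcal{U}} \q_\mu$ contains $\P$, is prime (being an ultrafilter limit of primes) and hence proper, and is a $(\star_2)_f$-ideal by Proposition~\ref{Prop:FiltersPreserveStar} since it is nonzero and $(\star_2)_f$ is of finite type. The $(\star_2)_f$-maximality of $\P$ then forces $\P = \lim_{\mathcal{U}} \q_\mu$.

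Finally, since the centers $\q_\mu$ all lie in $\Pi$, the ultrafilter $\mathcal{U}$ on $\{\q_\mu\}$ pushes forward to an ultrafilter on $\Pi$ with the same limit, so $\P$ belongs to the ultrafilter closure of $\Pi$; this is condition (3). The equivalence $(3) \Leftrightarrow (4)$ of Theorem~\ref{Thm:StarUltraSkolem} then yields the $(\star_1, (\star_2)_f)$-Skolem property for $\IntR(E,D)$. I expect the main obstacle to be precisely the passage carried out in the previous paragraph: Proposition~\ref{Prop:StarUltrafilter} as stated produces only a filter limit, whereas membership in the ultrafilter closure requires an ultrafilter limit, so one must extend $\mathcal{F}$ to $\mathcal{U}$ and recheck that $\lim_{\mathcal{U}} \q_\mu$ remains a proper $(\star_2)_f$-ideal so that maximality pins down the equality; one must also confirm that transporting from the (possibly repeating) family of centers to the larger family $\Pi$ does not disturb the ultrafilter-closure membership. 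This mirrors the filter-to-ultrafilter step already seen in the proof of Corollary~\ref{Cor:StarUltrafilter}.
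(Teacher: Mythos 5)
Your proof is correct and follows essentially the same route as the paper's: reduce to condition (3) of Theorem~\ref{Thm:StarUltraSkolem} and realize each $(\star_2)_f$-maximal ideal of $\IntR(E,D)$ as an ultrafilter limit of the centers of the $V_\mu$, which lie in $\Pi$. If anything, your direct appeal to Proposition~\ref{Prop:StarUltrafilter} combined with the hypothesis that each $\P_{\p,a}$ is a $(\star_2)_f$-ideal (plus the explicit filter-to-ultrafilter and subfamily-to-$\Pi$ checks) is slightly more careful than the paper's citation of Corollary~\ref{Cor:StarUltrafilter}, whose essentiality assumption is not literally among the theorem's hypotheses.
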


\begin{proof}
	Let $I$ be a $(\star_2)_f$-maximal ideal of $\IntR(E,D)$. Then by Corollary \ref{Cor:StarUltrafilter}, we have that $I$ is an ultrafilter limit of $\{\M_\mu \cap \IntR(E,D) \}$, where $\M_\mu$ is the maximal ideal of $V_\mu$. Since $\{\M_\mu \cap \IntR(E,D) \} \subseteq \{\P_{\p, a} \mid \p \in \Lambda, a \in E \}$, we know that $I$ is also an ultrafilter limit of $\{\P_{\p, a} \mid \p \in \Lambda, a \in E \}$. By Theorem \ref{Thm:StarUltraSkolem}, we get that $\IntR(E,D)$ has the $(\star_1, (\star_2)_f)$-Skolem property.
\end{proof}

\section{Failure of the strong Skolem property}\label{Sect:StrongSkolem}
\indent\indent In this section, we give examples of rings of integer-valued rational functions which do not have the strong Skolem property. First, we show that if $D$ is a pseudovaluation domain whose associated valuation domain has a principal maximal ideal, then $\IntR(D)$ has the Skolem property but not the strong Skolem property. Then we analyze the case when $\IntR(V)$ is not Prüfer, for $V$ a valuation domain. In this case, $\IntR(V)$ does not have the strong Skolem property and we give a description of the finitely-generated ideals of $\IntR(V)$ that are not Skolem closed.

If $D$ is a domain such that $\IntR(D)$ is Prüfer and has the Skolem property, then $\IntR(D)$ automatically has the strong Skolem property. However, it is not necessarily the case that the Skolem property implies the strong Skolem property if $\IntR(D)$ is not Prüfer. We can being constructing such an example using a PVD $D$ whose associated valuation domain has principal maximal ideal. We show that with some conditions on residue fields, we can find an ideal that is not Skolem closed in $\IntR(D)$. Further assumptions will make this ideal finitely-generated, showing that under some conditions, $\IntR(D)$ can have the Skolem property but not the strong Skolem property.

First, we define a PVD. These are local rings that behave almost like valuation domains. For references on these rings, see \cite{HedstromHouston, HedstromHoustonII}. 
\begin{definition}
	A domain $D$ is a \textbf{pseudovaluation domain} (PVD) if $D$ has a valuation overring $V$ such that $\Spec(D) = \Spec(V)$ as sets. The valuation domain is uniquely determined and is called the \textbf{associated valuation domain} of $D$.
\end{definition}

\begin{remark}
	In particular, a pseudovaluation domain and the associated valuation domain have the same (unique) maximal ideal. 
	
	One way to construct a pseudovaluation domain is to start with a valuation domain $V$. Let $\m$ be the maximal ideal of $V$. Consider the canonical projection $\pi:V \to V/\m$. Take a subfield $F \subseteq V/\m$. Then $D \coloneqq \pi^{-1}(F)$ is a pseudovaluation domain with associated valuation domain $V$. 
\end{remark}

We will focus on PVDs $D$ whose associated valuation domain has a principal maximal ideal. In this case, we can make use of Theorem \ref{Thm:UltraSkolem} to show that $\IntR(E,D)$ has the Skolem property for any nonempty subset $E$ of the field of fractions of $D$. We do this by showing that all of the maximal ideals of $\IntR(E,D)$ are ultrafilter limits of maximal pointed ideals. This proof makes use of the rational function $\theta$ found in \cite[Theorem 3.5]{IntValuedRational} setting $n = 2$. 

\begin{proposition}\label{Prop:PseudosingularPVDSkolem}
	Let $D$ be a PVD whose associated valuation domain $V$ has a principal maximal ideal. Also let $E$ be a nonempty subset of the field of fractions $K$ of $D$. Then the ring $\IntR(E,D)$ has the Skolem property. 
\end{proposition}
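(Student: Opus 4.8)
The goal is to show that for a PVD $D$ whose associated valuation domain $V$ has a principal maximal ideal, the ring $\IntR(E,D)$ has the Skolem property. By Theorem \ref{Thm:UltraSkolem}, it suffices to verify condition (3): that $\Pi = \{\M_{\m,a} \mid \m \in \Lambda, a \in E\}$ is dense in $\Max(\IntR(E,D))$ with respect to the ultrafilter topology, where I take $\Lambda = \Max(D)$. Since a PVD is local with unique maximal ideal $\m$ shared with $V$, the relevant pointed maximal ideals are exactly the $\M_{\m,a}$ for $a \in E$. So the plan is to show that every maximal ideal of $\IntR(E,D)$ is an ultrafilter limit of the family $\{\M_{\m,a} \mid a \in E\}$.

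The plan is to work locally at $\m$ using the structure of $\IntR(E,D)$ as an intersection of valuation-like overrings $D_{\m,a} = \{\varphi \in K(x) \mid \varphi(a) \in D\}$, and to exploit the principal maximal ideal of $V$. First I would recall that since $V$ has principal maximal ideal generated by some $\pi$, there is a monic unit-valued polynomial available, and more importantly the rational function $\theta$ from \cite[Theorem 3.5]{IntValuedRational} (with $n=2$) provides a concrete tool for separating values: $\theta$ behaves so that $\theta(a)$ lands in $V^\times$ or in $\m$ in a controlled way depending on where $a$ sits relative to the residue field $F$ of $D$. The key computational step is to take a finitely-generated ideal $I = (\varphi_1,\dots,\varphi_n)$ with $I(a) = D$ for every $a \in E$ and to produce, at each relevant evaluation point, a witness showing $I$ cannot be contained in any maximal ideal outside the family $\Pi$. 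Concretely, I would show that $\chi_I$ has the finite intersection property (condition (1) or (2) of Theorem \ref{Thm:UltraSkolem}): given $\varphi_1,\dots,\varphi_n \in I$ with $I$ proper, I must locate a single $a \in E$ with all $\varphi_i(a) \in \m$.

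The main obstacle, and where $\theta$ enters, is handling the gap between $D$ and its associated valuation domain $V$: the residue field $F = D/\m$ may be a proper subfield of $V/\m$, so an element $\varphi$ with $\varphi(a) \in V \setminus D$ evaluates into $V$ but not $D$, and the value ideals are computed in $D$. The principal maximal ideal lets me reduce questions about $D$-integrality to questions about the single valuation $v$ on $V$ together with the residue-field condition, and $\theta$ is the device that converts an element which is $V$-integer-valued but lands in $V/\m \setminus F$ into something detectable. I would use $\minval$ and the local polynomial machinery from \cite{Liu} to control, for each $\varphi_i$, the finite set of points $a \in E$ where $\varphi_i(a) \notin \m$, arguing that if no common zero-of-value-mod-$\m$ point existed then the $\varphi_i$ would generate the unit ideal in $\IntR(E,D)$ itself, contradicting properness.

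Once the finite intersection property for $\chi_I$ is established for all finitely-generated proper ideals, Theorem \ref{Thm:UltraSkolem} (the equivalence of conditions (2) and (4)) immediately yields that $\IntR(E,D)$ has the Skolem property, completing the proof. The delicate part is entirely in the explicit construction and evaluation of $\theta$ and the use of the principal generator of $\m$ to pin down the behavior of values modulo $\m$; the passage from the finite intersection property to the Skolem property is then purely formal via the cited theorem.
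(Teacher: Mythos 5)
You have the right framework: invoke Theorem \ref{Thm:UltraSkolem} with $\Lambda = \Max(D) = \{\m\}$, reduce to showing that $\chi_I$ has the finite intersection property for proper ideals, and bring in the rational function $\theta$ from \cite[Theorem 3.5]{IntValuedRational} with $n=2$. This matches the paper's setup. But the core mechanism is missing, and the reasoning you substitute for it does not work. The paper's argument is: for a generator $t$ of $\m$ in $V$, the function $\theta(x) = \frac{t(1+x^{4})}{(1+tx^2)(t+x^2)}$ lies in $\IntR(K,D)$ and satisfies $\theta(b) \in \m$ when $v(b)=0$ and $\theta(b) \in 1+\m$ when $v(b) \neq 0$. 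Given $\varphi_1, \varphi_2$ in a proper ideal $A$, one forms $\rho = \varphi_1 + \theta\left(\frac{\varphi_1}{\varphi_2}\right)\varphi_2 \in (\varphi_1,\varphi_2) \subseteq A$ and checks $v(\rho(a)) = \min\{v(\varphi_1(a)), v(\varphi_2(a))\}$ for all $a \in E$, so that $\chi_{\varphi_1} \cap \chi_{\varphi_2} = \chi_\rho$. Since no element of a proper ideal can be unit-valued at every point of $E$, no $\chi_\rho$ with $\rho \in A$ is empty, and closure under finite intersection then gives the finite intersection property. This ``realize the intersection as the characteristic set of a single element of the ideal'' step is the entire content of the proof, and your proposal never produces it.

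What you offer instead has two concrete problems. First, you propose to ``control, for each $\varphi_i$, the finite set of points $a \in E$ where $\varphi_i(a) \notin \m$'' --- but that set is generally infinite (take $\varphi_i = x$ and $E=V$: every unit of $V$ is such a point), so the local-polynomial machinery cannot be deployed the way you describe. Second, your fallback --- ``if no common zero-of-value-mod-$\m$ point existed then the $\varphi_i$ would generate the unit ideal, contradicting properness'' --- is precisely the implication that must be proved; asserting it is circular. Relatedly, you describe $\theta$ as a device for detecting values in $V/\m \setminus (D/\m)$, which is not its role here; its role is purely valuation-theoretic, to make $\rho$ pick up the minimum of the two valuations at every evaluation point. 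Without the explicit $\rho$ construction the proof does not close.
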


\begin{proof}
For each $\varphi \in \IntR(E,D)$, we set $\chi_\varphi = \{\M_{\m, a} \mid \varphi \in \M_{\m, a}, a \in E \}$, where $\m$ is the maximal ideal of $D$. Also let $t \in \m$ be a generator of $\m$ in $V$.

Let $A \subseteq \IntR(E,D)$ be a proper ideal. We first want to show that $\chi_A \coloneqq \{\chi_\varphi \mid \varphi \in A \}$ is closed under finite intersections. Take $\varphi_1, \varphi_2 \in A$. If $\varphi_2 = 0$, then $\chi_{\varphi_2} = \{\M_{\m, a} \mid a \in E\}$, so $\chi_{\varphi_1} \cap \chi_{\varphi_2} = \chi_{\varphi_1}$. Now suppose that $\varphi_2 \neq 0$. Set \[\theta(x) = \frac{t(1+x^{4})}{(1+tx^2)(t+x^2)}.\] We claim that $\theta \in \IntR(K,D)$. Denote by $v$ the valuation associated with $V$. Take $a \in K$. If $v(a) = 0$, then $v(\theta(a)) = v(t) + v(1+a^{4}) - 0 - 0 > 0$, so $\theta(a) \in \m  \subseteq D$. If $v(a) > 0$, then 
\[
\theta(a) = \frac{t(1+a^{4})}{(1+ta^2)(t+a^2)} = \frac{1+a^{4}}{(1+ta^2)(1+ \frac{a^2}{t})}. 
\] 
We have $1+a^{4} \equiv 1 \pmod \m$ and $(1+ta^2)(1+ \frac{a^2}{t}) \equiv 1 \cdot 1 \equiv 1 \pmod \m$. This means that $\theta(a) \in 1+ \m \subseteq D$. Lastly, suppose that $v(a) < 0$. We calculate that
\[
\theta(a) = \frac{t(1+a^{4})}{(1+ta^2)(t+a^2)} = \frac{\frac{1}{a^{4}}+1}{(\frac{1}{ta^2}+1)(\frac{t}{a^2}+ 1)}. 
\]
We see that $\frac{1}{a^{4}}+1 \equiv 1 \pmod \m$ and also $(\frac{1}{ta^2}+1)(\frac{t}{a^2}+ 1) \equiv 1 \cdot 1 \equiv 1 \pmod \m$. Thus, $\theta(a) \in 1+ \m \subseteq D$. Since $\theta(a) \in D$ for all $a \in K$, we have that $\theta \in \IntR(K,D)$. 

Now we consider $\rho(x) = \varphi_1(x) + \theta\left( \frac{\varphi_1(x)}{\varphi_2(x)}\right) \varphi_2(x)$. We know that $\varphi_2 \neq 0$ so $\varphi_2$ has only finitely many roots. Thus, $\theta\left( \frac{\varphi_1(x)}{\varphi_2(x)}\right) \in \IntR(K,D)$ as this rational function maps almost every element of $K$ to $D$ \cite[Proposition 1.4]{Liu}. This implies that $\theta\left( \frac{\varphi_1(x)}{\varphi_2(x)}\right) \in \IntR(E,D)$ and therefore $\rho(x) \in (\varphi_1, \varphi_2) \subseteq A$. 

Let $a \in E$. Supposing that $v(\varphi_1(a)) = v(\varphi_2(a))$, we have that $v\left(\theta\left(\frac{\varphi_1(a)}{\varphi_2(a)}\right) \right) > 0$ since we have shown before that $\theta(b) \in \m$ for any $b \in K$ such that $v(b) = 0$. Then $v(\rho(a)) = v(\varphi_1(a))$. If $v(\varphi_1(a)) < v(\varphi_2(a)))$, we have $v(\rho(a)) = v(\varphi_1(a))$. If $v(\varphi_1(a)) > v(\varphi_2(a))$, we have $v(\rho(a)) = v(\varphi_2(a))$. In summary, $v(\rho(a)) = \min\{v(\varphi_1(a)), v(\varphi_2(a)) \}$. This implies that $\chi_{\varphi_1} \cap \chi_{\varphi_2} = \chi_{\rho}$. Thus, $\chi_A$ is closed under finite intersections.

Since $A$ is a proper ideal, $\chi_A$ does not contain the empty set. If $\chi_A$ contained the empty set, then there would be some $\varphi \in A$ such that $\chi_\varphi = \emptyset$, which means that $\varphi(a)$ is a unit for all $a \in E$, making $\varphi$ a unit of $\IntR(E,D)$ and thus $A$ not a proper ideal. We have also just shown that $\chi_A$ is closed under finite intersections, so $\chi_A$ has the finite intersection property. By Theorem \ref{Thm:UltraSkolem}, we know that $\IntR(E,D)$ has the Skolem property. 
\end{proof}

For a PVD $D$ whose associated valuation domain has a principal maximal ideal, even though $\IntR(D)$ has the Skolem property, we can find some ideals of $\IntR(D)$ that are not Skolem closed. 

\begin{proposition}\label{Prop:PVDExample}
	Let $D$ be a PVD whose associated valuation domain $V$ has principal maximal ideal $\m$. Assume that $V/\m$ is infinite and $V \neq D$. Then $(x^2, \m)$ of $\IntR(D)$ is not Skolem closed.
\end{proposition}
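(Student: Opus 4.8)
The plan is to exhibit an element of the Skolem closure $(x^2,\m)^{\Sk}$ that does not lie in $(x^2,\m)$. Write $I=(x^2,\m)$, set $k=V/\m$ and $F=D/\m\subsetneq k$, and let $t\in\m$ generate $\m$ as an ideal of $V$, so that $\m=tV$ and every $m\in\m$ is $m=t\mu$ with $\mu\in V$. First I would compute the value ideals: since $\m\subseteq D$, one checks $I(a)=a^2D+\m$, which equals $\m$ when $a\in\m$ (because $a^2\in\m$) and equals $D$ when $a$ is a unit of $D$. Hence $I^{\Sk}=\{\varphi\in\IntR(D)\mid \varphi(a)\in\m\text{ for all }a\in\m\}$, and this ideal contains $x$ (as $x(a)=a\in\m$ for $a\in\m$). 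So everything reduces to proving $x\notin(x^2,\m)$.

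For the core of the argument I would evaluate a hypothetical representation at points of valuation one. Suppose $x=x^2g+h$ with $g\in\IntR(D)$ and $h\in\m\IntR(D)$; write $h=\sum_{i=1}^{n}m_i\psi_i$ with $m_i\in\m$, $\psi_i\in\IntR(D)$, and set $\mu_i=m_i/t\in V$. For any unit $\beta\in V$ the element $a:=t\beta$ lies in $\m\subseteq D$, and evaluating gives $t\beta=(t\beta)^2g(t\beta)+h(t\beta)$ with $(t\beta)^2g(t\beta)\in\m^2$; dividing by $t$ and reducing modulo $\m$ yields $\overline{h(t\beta)/t}=\bar\beta$. On the other hand $h(t\beta)/t=\sum_i\mu_i\psi_i(t\beta)$, and since each $\psi_i(t\beta)\in D$ has residue in $F$ and the reduction $V\to k$ is a ring homomorphism, $\overline{h(t\beta)/t}\in \bar\mu_1F+\cdots+\bar\mu_nF=:W$. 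Therefore $\bar\beta\in W$ for every unit $\beta\in V$; as $\bar\beta$ ranges over all of $k^\times$ this forces $k=W$, a finite-dimensional $F$-vector space, so $[k:F]\le n<\infty$. This is already a contradiction whenever $[k:F]=\infty$, and in particular when $F$ is finite, since then an infinite $k$ cannot be finite over $F$. So this computation settles the finite-residue-field case cleanly (and if $h=0$ it forces $k^\times\subseteq\{0\}$, which is likewise absurd).

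When $F$ is infinite the displayed count is inconclusive, and here I expect the main obstacle. In that case I would instead reduce $x=x^2g+h$ modulo $\m$ at the units of $D$: this forces $\overline{g(a)}=\bar a^{-1}$ for every unit $a\in D^\times$, so the unit-scale reduction of $g$ (its local polynomial $\loc_{g,1}$) agrees with $1/X$ on the infinite set $F^\times$ and hence equals $1/X$ in $k(X)$. I would then try to combine this with the requirement that $g$ and the $\psi_i$ be $D$-valued at valuation one — where the local polynomials $\loc_{g,t}$, $\loc_{\psi_i,t}$ and the transition residues are forced into $F$ — to contradict $F\subsetneq k$. The valuation-one computation above is elementary and fully rigorous even for rational $g$ and $\psi_i$, whereas extracting a contradiction in the infinite-residue-field case genuinely requires tracking the reductions of arbitrary integer-valued rational functions: I expect to need the minimum-valuation and local-polynomial apparatus of \cite{Liu} to reconcile the unit-scale behaviour $\loc_{g,1}=1/X$ with integer-valuedness on $D$ across the valuation-zero/valuation-one transition. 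That reconciliation is where all the difficulty is concentrated, and it is the step I would spend the most care on.
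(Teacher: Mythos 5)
Your reduction to showing $x\notin(x^2,\m)$ is correct, and your evaluation at the points $t\beta$ with $\beta\in V^\times$ is sound: writing $x=x^2g+\sum_i m_i\psi_i$ and reducing $h(t\beta)/t$ modulo $\m$ does force $V/\m=\bar\mu_1F+\cdots+\bar\mu_nF$, hence $[V/\m:F]\le n$. This is essentially the opening of the paper's own contradiction argument (the paper evaluates at $s\in V$ with $v(s)=v(t)$ after first absorbing the generators $t_i$ with $v(t_i)>v(t)$ into the $x^2$-term via $t_i/t^2\in V$, and concludes $V/\m$ is spanned over $D/\m$ by the $\overline{t_i/t}$). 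But this dimension count only kills the cases $[V/\m:F]=\infty$ and $F$ finite. The hypotheses of the proposition allow exactly the case you leave open — $F=D/\m$ infinite and $1<[V/\m:F]<\infty$ (e.g.\ $V/\m=\C$, $D/\m=\R$) — and there your proposal stops at a plan (``try to combine\ldots reconcile\ldots'') rather than an argument. That is a genuine gap, not a detail: it is the whole content of the proposition in the typical examples.

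Moreover, your proposed route for that case looks at the wrong object. Extracting $\overline{g(a)}=\bar a^{-1}$ on $F^\times$ constrains the coefficient $g$ of $x^2$ at valuation level $0$, but nothing prevents an integer-valued rational $g$ from having $\loc_{g,1}=1/X$; the contradiction cannot come from $g$ alone. The paper instead pins down the $\psi_i$ at level $v(t)$: after discarding redundant generators so that $\overline{t_1/t},\dots,\overline{t_m/t}$ are $F$-linearly independent, the identity at $s=a_1t_1+\cdots+a_mt_m$ (which has $v(s)=v(t)$ whenever the $a_j\in D$ are not all in $\m$) gives $\psi_i(s)\equiv a_i\pmod\m$ for each $i$. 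Then \cite[Proposition 2.19]{Liu} says that for any $u$ with $v(u)=0$ whose residue avoids the finitely many roots of $\loc_{f_i,t}$ and $\loc_{g_i,t}$ (where $\psi_i=f_i/g_i$), one has $v(\psi_i(tu))=\minval_{\psi_i}(v(t))$. Since $m>1$ and $F$ is infinite, one can choose such a generic $u=\sum a_j\,\overline{t_j/t}$-representative with $a_i\in\m$, forcing $\minval_{\psi_i}(v(t))>0$, and also one with $a_i\notin\m$, forcing $\minval_{\psi_i}(v(t))=0$ — a contradiction. You would need to supply this (or an equivalent) argument to close the infinite-$F$, finite-degree case.
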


\begin{proof}
	We claim that $x$ is in the Skolem closure of $(x^2, \m)$. If $a \in D$ is such that $v(a) = 0$, then the value ideal of $(x^2, \m)$ at $a$ is $D$. If $a \in D$ is such that $v(a) > 0$, then the value ideal of $(x^2, \m)$ at $a$ is $\m$. Thus, $x \in (x^2 , \m )^{\Sk}$.
	
	However, we claim that $x \notin (x^2, \m)$. If $x \in (x^2, \m)$, then
	\[
	x = \varphi(x)x^2 + \sum_{i=1}^{n} \psi_i(x) t_i, 
	\]
	for some $t_1, \dots, t_n \in \m$ and $\varphi, \psi_1, \dots, \psi_n \in \IntR(D)$. Let $t \in V$ be such that $t$ generates $\m$ in $V$. We can permute the indices so that for some $m \in \N$, we have $v(t_1) = \cdots = v(t_m) = v(t)$ and $v(t) < v(t_i)$ for any $i > m$. We cannot have $m = 0$ because otherwise evaluation at $x = t$ yields $t = \varphi(t)t^2 + \sum\limits_{i=1}^{n}\psi_i(t)t_i$ and the $v$ valuation of the right hand side is strictly greater than $v(t)$. We now rewrite
	\[
	\psi_{m+1}(x) t_{m+1} + \cdots +\psi_n(x) t_n = t^2 \left(\psi_{m+1}(x) \frac{t_{m+1}}{t^2} + \cdots + \psi_n(x) \frac{t_n}{t^2} \right) = t^2\chi(x),
	\]
	where $\chi(x) = \psi_{m+1}(x) \frac{t_{m+1}}{t^2} + \cdots + \psi_n(x) \frac{t_n}{t^2} \in \IntR(D,V)$. This is because for each $i > m$, the fact that $v(t_i) > v(t)$ and that $\m$ is principal in $V$ implies $v(t_i) \geq v(t^2)$ and thus $\frac{t_i}{t^2} \in V$. Therefore, we now are considering
	\[
	x = \varphi(x)x^2 + \chi(x)t^2 + \sum_{i=1}^{m} \psi_i(x) t_i. 
	\]
	We now assume that $\frac{t_1}{t}, \dots, \frac{t_m}{t} \mod \m$ are linearly independent over $D/\m$. If this is not the case, then there is some relation $d_1\frac{t_1}{t} + \cdots + d_m\frac{t_m}{t} \in \m$ for some $d_1, \dots, d_m \in D$ and not all $d_1, \dots, d_m$ are in $\m$. Without loss of generality, we can assume that $d_1 \notin \m$. Then $t_1 = -\frac{d_2}{d_1}  t_2 - \cdots -\frac{d_m}{d_1}  t_m$. This allows us to write $\sum\limits_{i=1}^{m} \psi_i(x) t_i$ as an $\IntR(D)$-linear combination of just $t_2, \dots, t_m$. We may keep going until we have linear independence. 
	
	Now let $s \in V$ such that $v(s) = v(t)$. Then $s = \varphi(s)s^2 + \chi(s)t^2 + \sum\limits_{i=1}^{m} \psi_i(s) t_i$. Dividing both sides by $t$ and then taking both sides modulo $\m$, we have 
	\[
		\frac{s}{t} = \sum_{i=1}^{m} \psi_i(s) \frac{t_i}{t} \mod \m. 
	\]
	Since each nonzero element of $V/\m$ can be represented by $\frac{s}{t}$ for some $s \in V$ with $v(s) = v(t)$, this shows that $V/\m = D/\m(\frac{t_1}{t} + \m, \dots, \frac{t_m}{t} + \m)$. This also shows that $m > 1$ since otherwise, $D/\m = V/\m$. Moreover, $D/\m$ must also be infinite.
	
	Now let $a_1, \dots, a_m \in D$ not all in $\m$. Also let $s = a_1t_1 + \cdots + a_mt_m$. Since $\frac{t_1}{t}, \dots, \frac{t_m}{t} \mod \m$ are linearly independent over $D/\m$, we know that $v(s) = v(t)$. This implies that
	\[
	a_1\frac{t_1}{t} + \cdots + a_m \frac{t_m}{t} = \sum_{i=1}^m\psi_i(s)\frac{t_i}{t} \mod \m.
	\]
	Due to linear independence of $\frac{t_1}{t}, \dots, \frac{t_m}{t} \mod \m$, we have 
	\[
	a_i = \psi_i(a_1t_1 + \cdots + a_mt_m) \mod \m
	\]
	for each $i\in \{1,\dots, m\}$. 
	
	Fix an $i \in \{1, \dots, m\}$. Write $\psi_i = \frac{f_i}{g_i}$ for some $f_i, g_i \in K[x]$, where $K$ is the field of fractions of $D$. Let $u \in V$ with $v(u) = 0$ such that $u + \m$ is not a root of $\loc_{f_i, t}(x)$ or $\loc_{g_i, t}(x)$. Then by \cite[Proposition 2.19]{Liu}, we know that $v(f_i(tu)) = \minval_{f_i}(v(t))$ and $v(g_i(tu)) = \minval_{g_i}(v(t))$. Thus, $v(\psi(tu)) = \minval_{\psi_i}(v(t))$. We can write $u \in V$ with $v(u) = 0$ as $u = a_1 t_1 + \cdots + a_m t_m$ for some $a_1, \dots, a_m \in D$ not all in $\m$. Since $m > 1$ and $D/\m$ is infinite, we can choose $u \in V$ such that $u + \m$ is not root of $\loc_{f_i, t}(x)$ or $\loc_{g_i, t}(x)$ and $a_i \in \m$. This implies that $\psi_i(tu) \in \m$ and thus $\minval_{\psi_i}(v(t)) > 0$. We can likewise choose $u \in V$ such that $u + \m$ is not root of $\loc_{f_i, t}(x)$ or $\loc_{g_i, t}(x)$ and $a_i \notin \m$. This will imply that $\psi_i(tu) \notin \m$ and thus $\minval_{\psi_i}(v(t)) = 0$, a contradiction. Thus, $x \notin (x^2, \m)$, which means $(x^2, \m)$ is not Skolem closed. 
\end{proof}

If $\m$ is a finitely-generated ideal of $D$, then $(x^2,\m)$ is a finitely-generated ideal of $\IntR(D)$, meaning $\IntR(D)$ has a finitely-generated ideal that is not Skolem closed, so $\IntR(D)$ does not have the strong Skolem property. The next proposition shows that the condition that $\m$ is finitely-generated in $D$ can be easily met. 

\begin{proposition}\label{Prop:FinDegResFieldExt}
	Let $D$ be a PVD with maximal ideal $\m$ and associated valuation domain $V$. Then $\m$ is a finitely-generated ideal of $D$ if and only if $[V/\m: D/\m] < \infty$ and $\m$ is principal in $V$. 
\end{proposition}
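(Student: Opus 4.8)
The plan is to prove both directions by relating the structure of $\m$ as a $D$-module to the structure of $V$ as an extension of $D$. The key observation is that since $D$ is a PVD with associated valuation domain $V$, the two rings share the same maximal ideal $\m$, and $V/\m$ is a field extension of $D/\m$. I would exploit the fact that $\m$, viewed inside $V$, carries the valuation structure of $V$, while $\m$ as an ideal of $D$ is a $D/\m$-vector space module that interacts with $V/\m$.

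For the backward direction, assume $[V/\m : D/\m] < \infty$ and that $\m$ is principal in $V$, say $\m = tV$ for some $t \in V$. I would pick representatives $u_1, \dots, u_r \in V$ whose images form a $D/\m$-basis of $V/\m$, and then argue that $t u_1, \dots, t u_r$ generate $\m$ as an ideal of $D$. The idea is that any element $z \in \m$ can be written as $z = tw$ with $w \in V$ (since $\m = tV$), and then $w$ can be expressed modulo $\m$ as a $D/\m$-combination of the $u_i$; lifting this and iterating using that $t\m \subseteq$ higher powers should give convergence or termination via a finiteness argument on the valuation. I expect the main subtlety here is controlling the ``remainder'' after one step: writing $w = \sum d_i u_i + w'$ with $w' \in \m$, so that $z = \sum (d_i)(t u_i) + t w'$ and $t w' \in t\m$; one must show this process closes up so that finitely many generators suffice, which should follow because $t w' \in \m^2$ in $V$ and a Nakayama-type or direct valuation-theoretic argument applies.

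For the forward direction, assume $\m$ is finitely-generated over $D$, say $\m = (t_1, \dots, t_n)D$. First I would show $\m$ is principal in $V$: since $V$ is a valuation domain, among $t_1, \dots, t_n$ there is one of minimal valuation, say $t_1$, and then $\m V = t_1 V$; combined with $\m$ being the maximal ideal of $V$, this shows $\m$ is principal in $V$ with generator $t_1$. Then, to show $[V/\m : D/\m] < \infty$, I would consider the images $t_i/t_1 \bmod \m$ in $V/\m$ and argue that these, together with the field $D/\m$, span $V/\m$: given any $s \in V$ with $v(s) = v(t_1)$, one writes $s = \sum \psi_i t_i$ with coefficients $\psi_i \in D$ (using that $s \in \m = (t_1, \dots, t_n)D$), divides by $t_1$, and reduces modulo $\m$ to express $s/t_1$ as a $D/\m$-combination of the $t_i/t_1$. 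Since every nonzero class in $V/\m$ is represented by such an $s/t_1$, this bounds the dimension by $n$.

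The hard part will be the backward direction's termination/closure argument, since writing a general element of $\m$ in terms of the proposed generators requires iterating the reduction $z \mapsto tw'$ and verifying that the process terminates or that the generated ideal is already all of $\m$ without needing infinitely many steps. I would handle this by showing directly that the $D$-submodule generated by $\{t u_i\}$ equals $\m$: the quotient $\m / (t u_1, \dots, t u_r)D$ should be shown to be zero by checking it is killed by the reduction, using that $\m = tV$ and that the $u_i$ span $V/\m$ over $D/\m$ forces every element of $V$ to lie in $\sum (D/\m) u_i + \m$, hence every element of $\m = tV$ lies in $\sum D\,(t u_i) + t\m$. Closing the loop then amounts to an argument that $t\m \subseteq (t u_1, \dots, t u_r)D$, which I expect to reduce to the same spanning statement applied one valuation-level deeper, and this is where care is needed to avoid circularity.
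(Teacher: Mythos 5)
Your forward direction is correct and in fact more self-contained than the paper's: the paper outsources the equivalence ``$\m$ finitely generated over $D$ $\iff$ $V$ is a finitely generated $D$-module and $\m$ is principal in $V$'' to a cited result of Hedstrom--Houston, and then only compares finite generation of $V$ as a $D$-module with finiteness of $[V/\m:D/\m]$. Your direct route --- extract a generator $t_1$ of minimal valuation to get $\m=\m V=t_1V$, then divide a relation $s=\sum\psi_i t_i$ by $t_1$ and reduce mod $\m$ to see that the classes $t_i/t_1 \bmod \m$ span $V/\m$ over $D/\m$ --- is sound.

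The gap is in the backward direction, precisely at the step you flagged. After writing $z=tw$ and $w=\sum d_iu_i+w'$ with $w'\in\m$, you are left with the remainder $tw'\in t\m$, and your plan to handle it by iterating, by Nakayama, or by ``the same spanning statement one valuation-level deeper'' does not close: each pass produces a fresh remainder, $\m$ need not be principal in $D$ (that is the content of the proposition), and Nakayama is unavailable precisely because you do not yet know $\m$ is a finitely generated $D$-module. The resolution is a one-line absorption, which is what the paper's proof does. Since the $u_i$ represent \emph{nonzero} classes of $V/\m$, each $u_i$ is a unit of $V$; hence $w'/u_1\in \m V=\m\subseteq D$, and therefore
\[
tw' \;=\; \left(\frac{w'}{u_1}\right)\cdot (tu_1) \;\in\; D\,(tu_1).
\]
So the remainder already lies in the ideal generated by $tu_1$, no iteration is needed, and $\m=(tu_1,\dots,tu_r)D$ in a single step. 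With that one observation inserted, your argument matches the statement.
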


\begin{proof}
	We know that $\m$ is a finitely-generated ideal of $D$ if and only if $V$ is a finitely-generated $D$-module and $\m$ is principal in $V$ \cite[Proposition 1.5]{HedstromHoustonII}. What is left to show is that under the assumption that $\m$ is principal in $V$, we have $[V/\m:D/\m] < \infty$ if and only if $V$ is a finitely-generated $D$-module.
	
	Suppose that $V = Dt_1 + \cdots + Dt_n$ for some $t_1, \dots, t_n \in V$. Then viewing this modulo $\m$, we get $V/\m = (D/\m)(t_1 + \m) + \cdots + (D/\m) (t_n + \m)$. This implies that $[V/\m : D/\m] < \infty$. 
	
	Now suppose that $[V/\m:D/\m] < \infty$. Then there exists some $t_1, \dots, t_n \in V^\times$ such that $V/\m = (D/\m)(t_1 + \m)+ \cdots + (D/\m)(t_n + \m)$. Since $\m$ is principal in $V$, we know $\m=tV$ for some $t \in V$. We claim that $\m = (tt_1, \dots, tt_n)D$. Because $t_1, \dots, t_n \in V$, we know that $tt_1, \dots, tt_n \in \m$. Therefore, $(tt_1, \dots, tt_n)D \subseteq \m$. Now take $d \in \m$. Then $\frac{d}{t} \in V$, so $\frac{d}{t} = a + c_1t_1 + \cdots + c_nt_n$ for some $c_1, \dots, c_n \in D$ and $a \in \m$. This implies that $d = at + c_1tt_1 + \cdots c_n tt_n$. Notice that $\frac{at}{tt_1} = \frac{a}{t_1} \in \m$ as $a \in \m$ and $t_1 \in V^\times$. Thus, $at \in tt_1D \subseteq (tt_1, \dots, tt_n)D$, meaning that $d \in (tt_1, \dots, tt_n)D$. This shows that $\m = (tt_1, \dots, tt_n)D$. Now, $V = Dt_1 + \cdots + Dt_n + \m = Dt_1 + \cdots + Dt_n$, so $V$ is a finitely-generated $D$-module. 
\end{proof} 

Note that $\IntR(D)$ in the following corollary has the Skolem property by Proposition \ref{Prop:PseudosingularPVDSkolem}, but $\IntR(D)$ does not have the strong Skolem property. 

\begin{corollary}
	Let $D$ be a PVD with associated valuation domain $V$ that has a principal maximal ideal $\m$. If $V/\m$ is infinite and $V \neq D$, then $\IntR(D)$ does not have the strong Skolem property. 
\end{corollary}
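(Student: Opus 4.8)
The plan is to assemble this corollary directly from the three propositions that precede it, since together they already supply every ingredient. The final statement asserts that for a PVD $D$ whose associated valuation domain $V$ has a principal maximal ideal $\m$, with $V/\m$ infinite and $V \neq D$, the ring $\IntR(D)$ fails the strong Skolem property. My strategy is to exhibit a single finitely-generated ideal of $\IntR(D)$ that is not Skolem closed; by the reformulation recorded in the earlier remark, the existence of such an ideal is precisely the negation of the strong Skolem property. The natural candidate is the ideal $(x^2, \m)$ studied in Proposition~\ref{Prop:PVDExample}.

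First I would invoke Proposition~\ref{Prop:PVDExample}: under the stated hypotheses (namely $V/\m$ infinite, $\m$ principal in $V$, and $V \neq D$), that proposition already establishes that $(x^2, \m)$ is not Skolem closed in $\IntR(D)$, because $x$ lies in its Skolem closure but not in the ideal itself. So the only genuine work in the corollary is to upgrade ``not Skolem closed'' to ``witnesses failure of the strong Skolem property,'' and for that I need $(x^2, \m)$ to be \emph{finitely generated}. Since $x^2$ is a single element, it suffices that $\m$ be a finitely-generated ideal of $D$. This is exactly where Proposition~\ref{Prop:FinDegResFieldExt} enters: it tells us that $\m$ is finitely generated in $D$ if and only if $[V/\m : D/\m] < \infty$ and $\m$ is principal in $V$.

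The mild gap to bridge is that the corollary's hypotheses guarantee $\m$ principal in $V$ but say nothing about the residue field extension being finite. I would handle this by a case distinction on $[V/\m : D/\m]$. If this degree is finite, Proposition~\ref{Prop:FinDegResFieldExt} makes $\m$ finitely generated, hence $(x^2,\m)$ is a finitely-generated ideal that is not Skolem closed, and we are done via the remark characterizing the strong Skolem property. If instead $[V/\m : D/\m]$ is infinite, I would note that one may still extract a finitely-generated subideal: choose finitely many generators $t_1, \dots, t_k \in \m$ so that $x$ already fails to lie in $(x^2, t_1, \dots, t_k)$ while $x$ remains in its Skolem closure, mirroring the evaluation-and-linear-independence argument of Proposition~\ref{Prop:PVDExample} but carried out over a finite-dimensional $D/\m$-subspace of $V/\m$. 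The point is that the obstruction found there only ever uses finitely many of the generators $t_i$ at a time.

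The main obstacle I anticipate is precisely this infinite-residue-degree case, because the clean reduction to Proposition~\ref{Prop:FinDegResFieldExt} is unavailable and one must re-run the core computation of Proposition~\ref{Prop:PVDExample} with a finitely-generated replacement for $\m$. The simplest and most likely intended route, however, is that the corollary is stated only as a direct consequence under the tacit reading that Proposition~\ref{Prop:FinDegResFieldExt} supplies finite generation of $\m$ whenever the residue extension is finite; the surrounding text (``the condition that $\m$ is finitely-generated in $D$ can be easily met'') suggests the author intends the finite-degree situation as the operative case. I would therefore present the proof as: combine Proposition~\ref{Prop:PseudosingularPVDSkolem} (Skolem property holds), Proposition~\ref{Prop:PVDExample} ($(x^2,\m)$ not Skolem closed), and Proposition~\ref{Prop:FinDegResFieldExt} (finite generation of $\m$) to produce a finitely-generated non-Skolem-closed ideal, and conclude that the strong Skolem property fails while the Skolem property persists.
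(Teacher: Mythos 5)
Your main line of argument is exactly the paper's: cite Proposition~\ref{Prop:PVDExample} for the failure of Skolem closure of $(x^2,\m)$, cite Proposition~\ref{Prop:FinDegResFieldExt} for finite generation of $\m$, and conclude via the remark that a finitely-generated non-Skolem-closed ideal negates the strong Skolem property. So in the case $[V/\m : D/\m] < \infty$ your proof coincides with the paper's and is correct.

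The point you raise about the residue degree is genuine, and it is worth saying plainly: the corollary's hypotheses give that $\m$ is principal in $V$ but say nothing about $[V/\m : D/\m]$, while Proposition~\ref{Prop:FinDegResFieldExt} requires \emph{both} conditions to conclude that $\m$ is finitely generated in $D$. The paper's own proof simply invokes Proposition~\ref{Prop:FinDegResFieldExt} without checking this, so the gap you identified is present in the source as well; the statement really should carry the hypothesis $[V/\m : D/\m] < \infty$ (equivalently, $\m$ finitely generated in $D$). However, your proposed repair for the infinite-degree case does not work. If you replace $\m$ by finitely many elements $t_1,\dots,t_k \in \m$, then $x$ is no longer in the Skolem closure of $(x^2, t_1,\dots,t_k)$: take $a \in D$ with $v(a) = v(t)$ and with $a/t + \m$ outside the $D/\m$-span of $t_1/t + \m, \dots, t_k/t + \m$ (possible since that span is a proper subspace of $V/\m$ when the degree is infinite). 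Writing $a = c a^2 + \sum_i d_i t_i$ with $c, d_i \in D$ and dividing by $t$ gives, modulo $\m$, that $a/t + \m$ lies in that span (the term $ca^2/t$ vanishes since $v(a^2/t) = v(t) > 0$), a contradiction. So $a \notin (a^2, t_1, \dots, t_k)D$ and the witness element $x$ is lost. The evaluation-and-linear-independence argument of Proposition~\ref{Prop:PVDExample} cannot be ``localized'' to finitely many $t_i$ in the way you suggest; the finite-degree hypothesis is doing real work, and your proof should be presented only under that hypothesis, as the paper tacitly intends.
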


\begin{proof}
	By Proposition \ref{Prop:FinDegResFieldExt}, we know that $(x^2, \m)$ is a finitely-generated ideal of $\IntR(D)$. We also know that $(x^2, \m)$ is not Skolem closed by \ref{Prop:PVDExample}. Thus, $(x^2, \m)$ is a finitely-generated ideal of $\IntR(D)$ that is not Skolem closed, implying that $\IntR(D)$ does not have the Skolem property. 
\end{proof}

Let $V$ be a valuation domain with field of fractions $K$. Take $E$ to be a nonempty subset of $K$ that is strongly coherent for $\IntR(E,V)$. The following lemma shows that $\IntR(E,V)$ is integrally closed, so we have that $\IntR(E,V)$ is not Prüfer if and only if there exists a nonzero finitely-generated ideal of $\IntR(V)$ that is not divisorial \cite[Proposition 34.12]{Gilmer}. It turns out this is a perspective that links Prüfer domains and the strong Skolem property.

\begin{lemma}\label{Lem:IntegrallyClosedTransfers}
	Let $D$ be a domain with field of fractions $K$. Suppose that $E$ is a nonempty subset of $K$. Then $\IntR(E,D)$ is integrally closed if and only if $D$ is integrally closed.
\end{lemma}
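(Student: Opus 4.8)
The plan is to prove the biconditional in Lemma~\ref{Lem:IntegrallyClosedTransfers} by establishing each direction separately, working with the field of fractions of $\IntR(E,D)$, which is $K(x)$. The easier direction is the reverse implication: assuming $\IntR(E,D)$ is integrally closed, I would recover the integral closedness of $D$ by observing that $D = \IntR(E,D) \cap K$. Indeed, constant rational functions lie in $\IntR(E,D)$ precisely when the constant lies in $D$ (since they must take values in $D$ on $E$, and $E$ is nonempty), so $D$ is the contraction of $\IntR(E,D)$ to the subfield $K$. Since the contraction of an integrally closed domain to a subfield of its fraction field is again integrally closed, this direction follows: any element of $K$ integral over $D$ is integral over $\IntR(E,D)$, hence lies in $\IntR(E,D) \cap K = D$.

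For the forward implication, assume $D$ is integrally closed and take $\varphi \in K(x)$ integral over $\IntR(E,D)$, say satisfying a monic relation
\[
\varphi^m + \psi_{m-1}\varphi^{m-1} + \cdots + \psi_0 = 0
\]
with each $\psi_i \in \IntR(E,D)$. I want to show $\varphi \in \IntR(E,D)$, i.e.\ that $\varphi(a) \in D$ for every $a \in E$. The natural strategy is to evaluate the integral relation at a point $a \in E$. For all but finitely many $a \in E$ (avoiding the finitely many poles of $\varphi$ and of the $\psi_i$), evaluation is a ring homomorphism, so $\varphi(a)$ satisfies the monic polynomial $T^m + \psi_{m-1}(a)T^{m-1} + \cdots + \psi_0(a)$ with coefficients $\psi_i(a) \in D$. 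Hence $\varphi(a)$ is integral over $D$, and $\varphi(a) \in K$, so by integral closedness of $D$ we get $\varphi(a) \in D$.

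The main obstacle is handling the finitely many exceptional points where either $\varphi$ or some $\psi_i$ has a pole, as well as verifying that $\varphi \in K(x)$ is genuinely a value-defined function (i.e.\ that if $\varphi(a) \in D$ for all but finitely many $a \in E$, then in fact $\varphi(a) \in D$ for all $a \in E$ at which $\varphi$ is defined). The cleanest way around this is to argue at each individual point $a \in E$ in isolation rather than excluding a global finite set: fix $a \in E$, and if $\varphi$ has a pole at $a$, derive a contradiction from the integral equation by comparing orders of vanishing, while if $\varphi$ is defined at $a$, one wants to conclude $\varphi(a) \in D$ directly. Concretely, I would localize the order-of-pole analysis at $a$: if $\varphi$ had a pole at $a$ of order $k \geq 1$, the leading term $\varphi^m$ would have a pole of order $mk$, which cannot be cancelled by the lower-order terms $\psi_i \varphi^i$ since each $\psi_i$ is defined (pole-free) at $a$ as an element of $\IntR(E,D)$, forcing a pole in the relation and contradicting that it equals zero.

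\medskip

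I expect the writeup to lean on the fact that $\IntR(E,D) \cap K = D$ for the converse and on a per-point pole/integrality argument for the forward direction; once the pole issue at each $a \in E$ is dispatched, the integral closedness of $D$ finishes the proof immediately. The delicate point to get right is ensuring the evaluation-at-$a$ argument is valid at \emph{every} point of $E$ where $\varphi$ is defined, not merely cofinitely many, so that the conclusion $\varphi \in \IntR(E,D)$ is complete rather than up to finitely many exceptions.
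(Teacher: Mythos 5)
Your proposal is correct and follows essentially the same route as the paper: the converse via $D = \IntR(E,D)\cap K$, and the forward direction by showing $\varphi$ has no pole at each $a\in E$ and then evaluating the integral equation there to invoke the integral closedness of $D$. The only cosmetic difference is that the paper dispatches the no-pole step by noting $\IntR(E,D)\subseteq K[x]_{(x-a)}$ and citing that this local ring is integrally closed, whereas you carry out the equivalent pole-order comparison by hand.
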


\begin{proof}
	If $\IntR(E,D)$ is integrally closed, then $D = \IntR(E,D) \cap K$ is also integrally closed. 
	
	Now suppose that $D$ is integrally closed. Also suppose that $\varphi \in K(x)$ is integral over $\IntR(E,D)$. Then there exists $\psi_0, \dots, \psi_{n-1} \in \IntR(E,D)$ such that $$\varphi^n + \psi_{n-1}\varphi^{n-1} + \cdots + \psi_1\varphi + \psi_0 = 0.$$Let $a \in E$. Since $\IntR(E,D) \subseteq K[x]_{(x-a)}$, which is integrally closed, then $\varphi$ being integral over $\IntR(E,D)$ means that $\varphi$ is integral over $K[x]_{(x-a)}$. This shows that $\varphi \in K[x]_{(x-a)}$. Then we have $$\varphi(a)^n + \psi_{n-1}(a)\varphi(a)^{n-1} + \cdots + \psi_1(a)\varphi(a) + \psi_0(a) = 0.$$Note that $\varphi(a)$ is defined and in $K$. The above equation shows that $\varphi(a)$ is integral over $D$ as each $\psi_i(a) \in D$. Thus, $\varphi(a) \in D$. This holds for all $a \in E$, meaning $\varphi \in \IntR(E,D)$, which shows that $\IntR(E,D)$ is integrally closed. 
\end{proof}

We need another lemma to aide us. This lemma constructs an integer-valued rational function valued in a valuation domain. The constructed rational function has the flavor of a function guaranteed by continuity, such as by Proposition 2.1 of \cite{Liu}, but there are additional properties to give more control. 

\begin{lemma}\label{Lem:Z}
	Let $V$ be a valuation domain with value group $\Gamma$, maximal ideal $\m$, associated valuation $v$, and field of fractions $K$. Suppose that $\Gamma$ is not divisible or $V/\m$ is not algebraically closed. Then for all $\varepsilon, \delta \in \Gamma$, with $\varepsilon, \delta > 0$, and $c \in K$, there exist $\varphi \in \IntR(K,V)$ and $\gamma > \delta$ with the following properties:
	\begin{enumerate}[i)]
		\item $v(\varphi(d)) \leq \varepsilon$ for all $d \in K$;
		\item $v(\varphi(d)) > 0$ if and only if $v(d-c) \geq \gamma$; and
		\item $v(\varphi(c)) = \varepsilon$. 
	\end{enumerate}
	
\end{lemma}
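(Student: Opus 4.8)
The plan is to reduce to the case $c = 0$ by replacing $x$ with $x - c$, which preserves $\IntR(K,V)$ and turns every occurrence of $v(d-c)$ into $v(d)$, and then to split on whether $\Gamma$ is divisible. The hypothesis guarantees that at least one of the two constructions below is available: if $\Gamma$ is not divisible I use a value-group obstruction, and if $\Gamma$ is divisible then by hypothesis $V/\m$ is not algebraically closed and I use a residue-field obstruction. In both cases the guiding idea is to write $\varphi$ as a ratio of two polynomials in $x$ of the same degree $n$ and the same leading coefficient, so that $v(\varphi(d)) \to 0$ as $v(d) \to -\infty$ (a unit far from $c$) while $v(\varphi(d))$ climbs to a plateau of height $\varepsilon$ as $v(d)$ grows (small near $c$), with the transition located at a prescribed value $\gamma$.

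When $\Gamma$ is not divisible, fix $n \geq 2$ with $n\Gamma \neq \Gamma$ and choose $w_1, w_2 \in K$ with $v(w_2), v(w_1) = v(w_2)+\varepsilon \notin n\Gamma$ and $v(w_2) > n\delta$; this is arrangeable because the non-multiple cosets of $n\Gamma$ are cofinal in $\Gamma$ and $n$ may be taken as large as needed to avoid both obstructions at once. Set
\[
\varphi(x) = \frac{x^n - w_1}{x^n - w_2}, \qquad \gamma = \frac{v(w_2)}{n}.
\]
Since $v(w_1), v(w_2) \notin n\Gamma$, neither $w_i$ is an $n$-th power in $K$, so $\varphi$ has no zero and no pole in $K$. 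A direct computation gives $v(d^n - w_i) = \min(nv(d), v(w_i))$ with no cancellation (the crossing value $v(w_i)/n$ lies outside $\Gamma$), whence $v(\varphi(d)) = \min(nv(d), v(w_1)) - \min(nv(d), v(w_2))$. This is $0$ for $v(d) < \gamma$, lies in $(0,\varepsilon)$ for $\gamma < v(d) < v(w_1)/n$, and equals $\varepsilon$ for $v(d)$ large; as $\gamma \notin \Gamma$ no $d$ has $v(d) = \gamma$, so ``$v(\varphi(d)) > 0$'' is exactly ``$v(d) \geq \gamma$.'' Evaluating at $0$ gives $v(\varphi(0)) = v(w_1) - v(w_2) = \varepsilon$, and the plateau shows $v(\varphi(d)) \leq \varepsilon$ throughout, verifying (i)--(iii).

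When $\Gamma$ is divisible, choose monic $p \in V[x]$ whose reduction $\bar p \in (V/\m)[x]$ is irreducible of degree $n \geq 2$ (available since $V/\m$ is not algebraically closed). As $V$ is integrally closed, $p$ has no root in $K$; moreover $p(a)$ is a unit for $a \in V$ and $v(p(a)) = n\,v(a)$ for $a \notin V$. Pick $w_1', w_2' \in K$ with $v(w_2') = \tau_2 > \delta$ and $v(w_1') = \tau_2 + \varepsilon/n$, where $\varepsilon/n \in \Gamma$ by divisibility, and set
\[
\varphi(x) = \frac{(w_1')^n\, p(x/w_1')}{(w_2')^n\, p(x/w_2')}.
\]
Irreducibility of $\bar p$ makes numerator and denominator root-free in $K$ and makes every sphere clean, so that $v\big((w_i')^n p(d/w_i')\big) = \min(nv(d), n\tau_i)$ for all $d$ with no exceptional residues; hence $v(\varphi(d)) = \min(nv(d), n\tau_1) - \min(nv(d), n\tau_2)$, which is $0$ for $v(d) \leq \tau_2$, rises linearly on $\tau_2 < v(d) \leq \tau_1$, and equals $n(\tau_1 - \tau_2) = \varepsilon$ for $v(d) \geq \tau_1$, with value $\varepsilon$ at $d = 0$. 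Here the threshold is at $\tau_2$, and I take $\gamma$ to be the cut just above $\tau_2$ in the Dedekind completion of $\Gamma$, so that ``$v(d) \geq \gamma$'' again names exactly the strict threshold produced.

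The main obstacle, and the exact point where the hypothesis is indispensable, is forcing $\varphi$ to be simultaneously zero-free and pole-free on all of $K$---needed for integer-valuedness and for the bound (i), since a zero would give $v(\varphi(d)) = \infty > \varepsilon$---while still exhibiting the bump and meeting $v(\varphi(c)) = \varepsilon$ exactly. A short continuity argument with $\minval_\varphi$ shows that, for infinite $V/\m$, no rational function can have $v(\varphi(d)) > 0$ on a closed ball $\{v(d-c) \geq \gamma\}$ with $\gamma \in \Gamma$ and be a unit just outside it; this is why $\gamma$ must be taken in the divisible hull (non-divisible case) or as the cut above $\tau_2$ (divisible case), so that no $d \in K$ realizes $v(d-c) = \gamma$. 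I expect the only genuinely fiddly step to be the coset bookkeeping in the non-divisible case---selecting $n$ and the valuations of $w_1, w_2$ to escape $n\Gamma$ simultaneously while keeping $\gamma > \delta$---with everything else reducing to the valuation computations above together with Liu's description of $v(\psi(d))$ through $\minval$ and local polynomials.
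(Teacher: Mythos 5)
Your proposal follows essentially the same route as the paper: in both cases you build $\varphi$ as a ratio of two scaled copies of the same degree-$n$ unit-valued expression ($(x-c)^n$ plus an element whose valuation avoids $n\Gamma$ in the non-divisible case, a monic lift of a root-free residue polynomial evaluated at $(x-c)/w_i'$ in the divisible case), with the same piecewise valuation computation and the same placement of the two thresholds at distance $\varepsilon$. The one step you leave as an assertion---choosing $n$ and $v(w_1),v(w_2)$ so that both valuations escape $n\Gamma$ while differing by $\varepsilon$---is exactly where the paper does explicit casework (splitting on whether $\varepsilon/m\in\Gamma$ and passing to $n=2m$ otherwise), and your treatment of the strict-versus-weak threshold at $v(w_2')$ in the divisible case is, if anything, more careful than the paper's own wording.
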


\begin{proof} 
	We split into two cases.
	\begin{enumerate}
		\item Suppose that $\Gamma$ is not divisible.
		
		Let $\varepsilon, \delta \in \Gamma$ with $\varepsilon, \delta > 0$ and $c \in K$. Since $\Gamma$ is not divisible, there exists $\alpha \in \Gamma$ and $m \in \N$ with $m > 0$ such that $\frac{\alpha}{m}\notin \Gamma$. If $\frac{\varepsilon}{m} \in \Gamma$, then $\frac{\alpha}{m} + \frac{\varepsilon}{m} = \frac{\alpha + \varepsilon}{m} \notin \Gamma$ and take $a \in K$ such that $v(a) = \alpha + m\delta + \varepsilon$, $b \in K$ such that $v(b) = \alpha + m\delta$, and $n = m$. If $\frac{\varepsilon}{m} \notin\Gamma$, then $\frac{\varepsilon}{2m} \notin\Gamma$ and take $a \in K$ such that $v(a) = 2\varepsilon + 2m\delta$, $b \in K$ such that $v(b) = \varepsilon + 2m\delta$, and $n = 2m$. In either case, $\frac{v(a)}{n}$ and $\frac{v(b)}{n}$ are both not in $\Gamma$.
		
		Now set $$\varphi(x) = \frac{(x-c)^n+a}{(x-c)^n+b}.$$Let $d \in K$. We check that \[v(\varphi(d)) = \begin{cases}0, & \text{if $nv(d-c)<v(b)$},\\
			nv(d-c)-v(b), &\text{if $v(b) < nv(d-c) < v(a)$}, \\
			v(a)-v(b),& \text{if $nv(d-c) > v(a)$.} \end{cases} \]This shows that $\varphi \in \IntR(K,V)$ since $\frac{v(a)}{n}, \frac{v(b)}{n} \notin \Gamma$. Furthermore, we see that $nv(d-c) < v(a)$ implies $nv(d-c)-v(b) < v(a) - v(b) = \varepsilon$, so $v(\varphi(d)) \leq \varepsilon$ for all $d \in K$. This also shows that $v(\varphi(d)) > 0$ if and only if $v(d-c) \geq \frac{v(b)}{n}$. Note that $\frac{v(b)}{n} > \delta$. We also verify that $v(\varphi(c)) = \varepsilon$. 
		
		\item Suppose that $\Gamma$ is divisible and $V/\m$ is not algebraically closed. 
		
		Pick $b \in K$ such that $v(b) > \delta$. Then let $a \in K$ such that $v(a) = v(b) + \frac{\varepsilon}{n}$, which exists since $\Gamma$ is divisible. Because $V/\m$ is not algebraically closed, there exists a monic, nonconstant, unit-valued polynomial $f \in V[x]$. Now set $$\varphi(x) = \frac{a^nf(\frac{x-c}{a})}{b^nf(\frac{x-c}{b})},$$where $n$ is the degree of $f$. Now let $d \in K$. We use the fact that $v(a_1^n f(\frac{a_2}{a_1})) = \min\{nv(a_1), nv(a_2)\}$ for each $a_1, a_2 \in K$ with $a_1 \neq 0$ \cite[Corollary 2.3]{PruferNonDRings} to calculate that $$v(\varphi(d)) = \begin{cases}0, &\text{if $v(d-c) < v(b)$}\\
			nv(d-c)-nv(b), &\text{if $v(b) \leq v(d-c) \leq v(a)$}\\
			nv(a) - nv(b), &\text{if $v(a) < v(d-c)$}.\end{cases}$$This shows that $\varphi \in \IntR(K,V)$ and $v(\varphi(d)) \leq nv(a)-nv(b) = \varepsilon$ for all $d \in K$ and $v(\varphi(d)) = nv(a) - nv(b) > 0$ if and only if $v(d-c) \geq v(b)$. Note that $v(b) > \delta$. Lastly, we verify that $v(\varphi(c)) = nv(a)- nv(b) = \varepsilon$. 
	\end{enumerate}
\end{proof}

Now we are ready to link the property of being a Prüfer domain with the strong Skolem property in the context of rings of integer-valued rational functions. 

\begin{proposition}\label{Prop:tSkf}
	Let $V$ be a valuation domain with field of fractions $K$. Suppose that the value group of $V$ is not divisible or the residue field of $V$ is not algebraically closed. Also suppose that $E$ is some subset of $K$ that is strongly coherent for $\IntR(E,V)$. Then for all nonzero fractional ideals $I$ of $\IntR(E,V)$, we have $I_t = I^{\Sk_f}$.
\end{proposition}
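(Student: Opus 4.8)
The plan is to prove the two inclusions separately, the containment $I^{\Sk_f}\subseteq I_t$ being essentially free and the reverse $I_t\subseteq I^{\Sk_f}$ carrying all the content. Since $\Sk_f$ is a finite-type star operation on $\IntR(E,V)$ (the strong coherence of $E$ is what guarantees this) and every finite-type star operation is dominated by $t$, we immediately get $I^{\Sk_f}\subseteq I_t$ for every nonzero fractional ideal $I$. For the reverse inclusion, both $t$ and $\Sk_f$ are of finite type and are computed as unions over the finitely-generated subideals; moreover, for a finitely-generated $J$ one has $J^{\Sk_f}=J^{\Sk}$ and $J_t=J_v$. Thus it suffices to prove $J_v\subseteq J^{\Sk}$ for every nonzero finitely-generated fractional ideal $J$.

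I would argue this inclusion by contraposition. Fix $J=(\varphi_1,\dots,\varphi_n)$ and $\varphi\in J_v=(J^{-1})^{-1}$, and suppose $\varphi\notin J^{\Sk}$, so that $\varphi(a_0)\notin J(a_0)$ for some $a_0\in E$. Because $V$ is a valuation domain, the value ideal $J(a_0)$ is generated by its element of least valuation, so writing $\mu(a)=v(J(a))=\min_i v(\varphi_i(a))$ the failure says exactly $v(\varphi(a_0))<\mu(a_0)$; set $\varepsilon=\mu(a_0)-v(\varphi(a_0))>0$. The goal is to produce a single $\psi\in J^{-1}$ with $v((\varphi\psi)(a_0))<0$, which contradicts $\varphi\in(J^{-1})^{-1}$ and finishes the proof. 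The key reformulation to keep in mind is that membership $\psi\in J^{-1}$ is precisely the family of pointwise bounds $v(\psi(a))\ge -\mu(a)$ for all $a\in E$.

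The construction of $\psi$ is where Lemma \ref{Lem:Z} enters, and it is the heart of the argument. Start from any base element $\psi_1\in J^{-1}$ (for instance a common denominator of $J$) and set $\beta=v(\psi_1(a_0))+\mu(a_0)\ge 0$. If $\beta=0$, then $\psi_1$ itself satisfies $v((\varphi\psi_1)(a_0))=v(\varphi(a_0))+v(\psi_1(a_0))=-\varepsilon<0$ and we are done. Otherwise I would invoke Lemma \ref{Lem:Z}, whose hypotheses on the value group and residue field are exactly those assumed here, to manufacture a ``bump'' $\zeta\in\IntR(K,V)$ centered at $c=a_0$ with $v(\zeta(d))\le\beta$ for all $d$, with $v(\zeta(a_0))=\beta$, and with $v(\zeta(d))>0$ only on a ball $v(d-a_0)\ge\gamma$; taking the lemma's $\delta$ large forces $\gamma$ large and hence the support of the bump as tight as needed. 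I then set $\psi=\psi_1/\zeta$. At $a_0$ this gives $v(\psi(a_0))=v(\psi_1(a_0))-\beta=-\mu(a_0)$, whence $v((\varphi\psi)(a_0))=-\varepsilon<0$, as desired.

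It remains to verify $\psi\in J^{-1}$, and this is the step I expect to be the main obstacle: we have pushed $\psi$ as large as the $J^{-1}$-bound permits at $a_0$, and must now guarantee $v(\psi(a))\ge-\mu(a)$ persists at every other point of $E$. Off the support of $\zeta$ the function $\zeta$ is unit-valued, so $\psi=\psi_1$ there and nothing changes; on the support the only danger is the decrease by at most $\beta$ caused by dividing by $\zeta$. Here I would use the local analysis of valuations of rational functions furnished by the $\minval$ and $\loc$ machinery of \cite{Liu}: since no $\varphi_i$ and no $\psi_1$ has a pole at $a_0\in E$, for $a$ sufficiently close to $a_0$ one has $\mu(a)=\mu(a_0)$ and $v(\psi_1(a))\ge v(\psi_1(a_0))$, so the slack $v(\psi_1(a))+\mu(a)$ stays at least $\beta$ throughout a ball around $a_0$. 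Choosing $\delta$ so that the bump's support sits inside this ball makes the decrease harmless, giving $\psi\in J^{-1}$. One also checks that the functions produced by Lemma \ref{Lem:Z} have no zeros or poles in $K$, so dividing by $\zeta$ introduces no new bad points. This produces the required contradiction, establishes $J_v\subseteq J^{\Sk}$ for finitely-generated $J$, and hence $I_t=I^{\Sk_f}$ in general.
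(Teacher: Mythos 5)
Your proof is correct and follows essentially the same route as the paper: the inclusion $I^{\Sk_f}\subseteq I_t$ comes from $\Sk_f$ being a finite-type star operation, and the reverse inclusion is reduced to finitely generated ideals and proved by contradiction, using Lemma \ref{Lem:Z} to build a bump $\zeta$ concentrated at the bad point $a_0$ with $J\subseteq(\zeta)$ and $v(\zeta(a_0))>v(\varphi(a_0))$, so that $\varphi\in J_v\subseteq(\zeta)_v=(\zeta)$ yields the contradiction. The only notable deviation is that you take the bump height to be exactly $\mu(a_0)$ rather than $\mu(a_0)-\varepsilon$, which spares you the paper's case split on whether the value group has a least positive element but in exchange requires the exact local constancy of the valuations of the generators near $a_0$ (true, but sharper than the $\varepsilon$-form of continuity the paper cites from \cite{Liu}).
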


\begin{proof}	
	It suffices to show that $I_v = I^{\Sk}$ for all nonzero finitely-generated integral ideals $I$ of $\IntR(E,V)$. 
	
	Suppose that $I$ is an integral ideal of $\IntR(E,V)$. Let $\varphi \in I^{\Sk}$ and $\psi \in I^{-1}$ be nonzero. Let $a \in E$. Then $\varphi(a) \in I(a)$ so there exists $\rho \in I$ such that $\rho(a) = \varphi(a)$. Since $\psi\rho \in \IntR(E,V)$, we have that $\psi(a)\rho(a) \in V$, except for possibly the finitely many values of $a \in E$ for which $\psi(a)$ is not defined. If $\psi(a)$ is defined, we have that $\psi(a)\varphi(a) = \psi(a)\rho(a) \in V$. Since $E$ is a strongly coherent set for $\IntR(E,V)$ and $(\psi\varphi)(a) \in V$ for all but finitely many $a \in E$, we get   $\varphi\psi \in \IntR(E,V)^{\Sk} = \IntR(E,V)$, implying that $\varphi \in (I^{-1})^{-1} = I_v$. We then have $I^{\Sk} \subseteq I_v$. 
	
	Now suppose that $I$ is finitely-generated, so there exist $\psi_1, \dots, \psi_n \in \IntR(E,V)$ such that $I=(\psi_1, \dots, \psi_n)$. Let $v$ be the valuation associated with $V$ and $\Gamma$ the value group. Suppose for a contradiction that there exists $\varphi \in I_v$ such that $\varphi \notin I^{\Sk}$. Then there exists $a \in E$ such that $v(\varphi(a)) < v(\psi_j(a))$ for all $j$. We know that $I(a)$ is generated by $\psi_i(a)$ for some $i$. We can assume that there does not exist a smallest strictly positive element in $\Gamma$ since otherwise $V$ has principal maximal ideal, implying that $\IntR(E,V)$ is Prüfer and therefore $I= I_v = I^{\Sk}$. Then there exists $\varepsilon \in \Gamma$ such that $0 < \varepsilon < v(\psi_i(a)) - v(\varphi(a))$. Due to \cite[Proposition 2.1]{Liu}, by the continuity of $\psi_1, \dots, \psi_n$, there exists $\delta \in \Gamma$ such that for all $d \in K$ with $v(d-a) > \delta$, we have $v(\psi_j(d)) > v(\psi_i(a)) - \varepsilon$ for all $j$. By Lemma \ref{Lem:Z}, there exists $\rho \in \IntR(E,V)$ such that $v(\rho(d)) \leq v(\psi_i(a)) - \varepsilon$ for all $d \in K$ and there exists $\gamma \in \Q\Gamma$ with $\gamma > \delta$ such that $v(\rho(d)) > 0$ if and only if $v(d-a) > \gamma$. 
	
	For each $j$, consider $\frac{\psi_j}{\varphi}$. Let $d \in E$. Then we have that $$v\left(\frac{\psi_j(d)}{\rho(d)}\right) = \begin{cases}
		v(\psi_j(d)), &\text{if $v(d-a)\leq \gamma$,}\\
		v(\psi_j(d)) - v(\rho(d)) \geq v(\psi_j(d)) - (v(\psi_j(d)) - \varepsilon) > 0, &\text{if $v(d-a) > \gamma$,}
	\end{cases}$$which shows that $(\psi_1, \dots, \psi_n) \subseteq (\rho)$. We then deduce that $\varphi \in I_v \subseteq (\rho)_v = (\rho)$ so we must have $v(\varphi(a)) \geq v(\rho(a))$. However,  $v(\rho(a)) = v(\psi_i(a)) - \varepsilon > v(\varphi(a))$, a contradiction. 
\end{proof}

\begin{remark}
	If $\IntR(E,V)$ does not have the strong Skolem property and $E$ is strongly coherent set for $\IntR(E,V)$, then the finitely-generated ideals of $\IntR(E,V)$ that are not Skolem closed are exactly the ones that are not divisorial. 
\end{remark}

Suppose $V$ is a valuation domain with algebraically closed residue field and maximal ideal that is not principal. Assume further that the value group of $V$ is not divisible. We show in the following proposition that $\IntR(V)$ does not have the strong Skolem property by explicitly finding a finitely-generated ideal that is not Skolem closed. This also shows that $\IntR(V)$ is not Prüfer in this case, since this finitely-generated ideal that is not Skolem closed is also not divisorial by Proposition \ref{Prop:tSkf}. Additionally, if $V$ is a valuation domain with algebraically closed residue field and divisible value group, then Proposition 2.38 of \cite{Liu} implies that $\IntR(V)$ is not Prüfer. This gives an alternative proof of Theorem 2.29 of \cite{Liu}, which classifies exactly when $\IntR(V)$ is a Prüfer domain for a valuation domain $V$.

\begin{proposition}\label{Prop:NotStrongSkolemValuation}
	Let $V$ be a valuation domain with algebraically closed residue field and maximal ideal that is not principal. Suppose the value group $\Gamma$ of $V$ is not divisible. Let $\m$ denote the maximal ideal of $V$. Then for any $t \in \m$, the finitely-generated ideal $(x^2, t^2)$ of $\IntR(V)$ is not Skolem closed, meaning $\IntR(V)$ does not have the strong Skolem property. 
\end{proposition}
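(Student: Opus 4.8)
The plan is to exhibit a single rational function, namely $\varphi = tx$, that lies in the Skolem closure of $(x^2,t^2)$ but not in $(x^2,t^2)$ itself; since $(x^2,t^2)$ is finitely generated, this produces a finitely-generated ideal that is not Skolem closed, so by the Remark characterizing the strong Skolem property, $\IntR(V)$ fails it. First I would record the value ideals: for $a \in V$ one has $(x^2,t^2)(a) = (a^2,t^2) = \{z \in V \mid v(z) \geq 2\min\{v(a),v(t)\}\}$, using that constants already realize every value of a generator. Membership $tx \in (x^2,t^2)^{\Sk}$ is then the elementary valuation inequality $v(t)+v(a) \geq 2\min\{v(a),v(t)\}$, valid for every $a \in V$. (If $V$ is a strongly coherent set, Proposition \ref{Prop:tSkf} identifies the Skolem closure with the divisorial closure, so this simultaneously shows $(x^2,t^2)$ is not divisorial, recovering the non-Pr\"ufer assertion.)

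The heart of the argument is to show $tx \notin (x^2,t^2)$. I would suppose for contradiction that $tx = f x^2 + g t^2$ with $f,g \in \IntR(V)$ and analyze this identity one valuation sphere at a time through the minimum valuation functions. For a point $a$ with $v(a) = \gamma < v(t)$ the term $g(a)t^2$ has valuation $\geq 2v(t) > v(t)+\gamma = v((tx)(a))$, so it is strictly dominated and $v(f(a)a^2) = v(t)+\gamma$, forcing $\minval_f(\gamma) = v(t) - \gamma$ on $[0,v(t))$. Thus $\minval_f$ has left-slope $-1$ at $\gamma = v(t)$, while continuity gives $\minval_f(v(t)) = 0$ and integrality ($f \in \IntR(V)$) gives $\minval_f \geq 0$, so its right-slope $r_f$ at $v(t)$ satisfies $r_f \geq 0$.

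Next I would translate these slopes into the divisor of the local polynomial $\loc_{f,t}$, a rational function over the residue field $k = V/\m$. The left- and right-slopes of $\minval_f$ at $v(t)$ are exactly the top and bottom degrees of $\loc_{f,t}$, so $\loc_{f,t} \sim y^{-1}$ as $y \to \infty$ and $\loc_{f,t} \sim y^{r_f}$ as $y \to 0$; in particular $\loc_{f,t}$ has a zero at $\infty$ and no pole at $0$. On the other hand, a pole of $\loc_{f,t}$ at a point of $k^\times$ would mean $v(f(a)) < \minval_f(v(t)) = 0$ for some $a$ with $v(a) = v(t)$, contradicting $f \in \IntR(V)$; hence $\loc_{f,t}$ has no pole at any point of $k^\times$ either. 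Since $k$ is algebraically closed, a rational function with no poles on all of $\mathbb{P}^1(k)$ must be constant, contradicting that $\loc_{f,t}$ is nonconstant (its top and bottom degrees $-1$ and $r_f$ differ). This contradiction shows no such $f,g$ exist, so $tx \notin (x^2,t^2)$.

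I expect the main obstacle to be the precise slope-to-divisor dictionary for $\loc_{f,t}$ together with the claim that integrality of $f$ rules out $k^\times$-poles; these are where the local-polynomial results of \cite{Liu} must be invoked carefully, and this bookkeeping is the valuation-theoretic analogue of the computation in Proposition \ref{Prop:PVDExample}. The algebraically closed residue field is essential in the last step---upgrading "no $k$-rational poles" to "constant"---while the standing hypothesis that $\Gamma$ is not divisible is what places us in the non-Pr\"ufer regime complementary to the divisible case treated via Proposition 2.38 of \cite{Liu}. As an alternative to the direct computation, one could mirror the proof of Proposition \ref{Prop:tSkf}: use Lemma \ref{Lem:Z} to manufacture a single $\rho \in \IntR(V)$ with $(x^2,t^2) \subseteq (\rho)$ yet $v(\rho(a)) > v((tx)(a))$ at a suitably chosen point, contradicting $tx \in (x^2,t^2) \subseteq (\rho)_v = (\rho)$.
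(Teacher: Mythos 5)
Your proposal is correct and follows essentially the same route as the paper: the same witness $tx$, the same computation of the value ideals $(a^2,t^2)$, and the same analysis showing $\minval_\varphi(\gamma)=v(t)-\gamma$ on $[0,v(t))$, hence a left slope of $-1$ and a right slope $\geq 0$ at $v(t)$, which is incompatible with $\varphi\in\IntR(V)$ when the residue field is algebraically closed. The only difference is that where the paper closes by citing \cite[Lemma 2.25]{Liu}, you sketch a direct proof of that step via the divisor of the local polynomial on $\mathbb{P}^1$ of the residue field; this is an unpacking of the cited lemma rather than a genuinely different argument.
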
 

\begin{proof}
	We will show that the finitely-generated ideal $(x^2, t^2)$ of $\IntR(V)$ is not Skolem closed by showing that $tx \in (x^2,t^2)^{\Sk} \setminus (x^2, t^2)$. Let $d \in V$. Then the value ideal $(x^2,t^2)(d)$ is equal to $(d^2, t^2)$, so $$(x^2,t^2)(d) = \begin{cases}
		(d^2), &\text{if $v(d) \leq v(t)$}\\
		(t^2), &\text{if $v(d) > v(t)$.}
	\end{cases}$$On the other hand, we have that if $v(d) \leq v(t)$, then $v(td) \geq v(d^2)$, so $td \in (d^2)$. If $v(d) > v(t)$, then $v(td) > v(t^2)$, so $td \in (t^2)$. Thus, $tx \in (x^2, t^2)^{\Sk}$. 
	
	Now we want to show that $tx \notin (x^2, t^2)$. Suppose on the contrary that there exists $\varphi, \psi \in \IntR(V)$ such that $tx = \varphi(x)x^2 + \psi(x)t^2$. Let $d \in V$ such that $0 \leq v(d) < v(t)$. Then $td = \varphi(d)d^2 + \psi(d)t^2$ implies $v(td) = v(\varphi(d)d^2)$ since $v(td) < v(t^2) \leq v(\psi(d)t^2)$. This implies then that $v(\varphi(d)) = v(t) - v(d)$. Thus, $\minval_\varphi(\gamma) = v(t) - \gamma$ for $\gamma \in \Gamma$ such that $0 \leq \gamma < v(t)$. This also shows that $\minval_\varphi(v(t)) = 0$ and therefore there exists some $\varepsilon \in \Q\Gamma$ with $\varepsilon > 0$ such that there exists $c_1, c_2 \in \Z$ and $\beta_1, \beta_2 \in \Gamma$ so that $$\minval_\varphi(\gamma) = \begin{cases}
		c_1\gamma + \beta_1, &\text{if $v(t) - \varepsilon < \gamma \leq v(t)$}\\
		c_2\gamma + \beta_2, &\text{if $v(t) \leq \gamma < v(t) + \varepsilon$,}
	\end{cases}$$ since $\minval_\varphi$ is piecewise linear \cite[Proposition 2.16]{Liu}. Note that we found that $c_1 = -1$ and we must have $c_2 \geq 0$ since $\minval_\varphi(v(t)) = 0$. Therefore, by \cite[Lemma 2.25]{Liu}, we have that $\varphi \notin \IntR(V)$, a contradiction. This means that $tx \notin (x^2, t^2)$. 
	
	Since $(x^2, t^2)$ is a finitely-generated ideal of $\IntR(V)$ that is not Skolem closed, we know that the ring $\IntR(V)$ does not have the strong Skolem property. 
\end{proof}

\bibliographystyle{amsalpha}
\bibliography{references}
\end{document}